\let\mathscr\mathcal
\setlist[enumerate,1]{label={(\arabic*)},itemsep=\parskip} 
\setlist[itemize,1]{itemsep=\parskip} 
\newlist{thmlist}{enumerate}{2}
\setlist[thmlist,1]{label={\em(\roman*)},ref={(\roman*)},%
  itemsep=\parskip,leftmargin=*,align=left}
\setlist[thmlist,2]{label={\em(\alph*)},ref={(\alph*)},%
  itemsep=\parskip,leftmargin=*,align=left,topsep=0.1cm}
\newlist{defnlist}{enumerate}{2}
\setlist[defnlist,1]{label={(\roman*)},ref={(\roman*)},itemsep=\parskip,%
  leftmargin=*,align=left}
\setlist[defnlist,2]{label={(\alph*)},ref={(\alph*)},itemsep=\parskip,%
  leftmargin=*,align=left,topsep=0.1cm}
\newtheorem{thm}[subsubsection]{Theorem}
\newtheorem{cor}[subsubsection]{Corollary}
\newtheorem{lem}[subsubsection]{Lemma}
\newtheorem{prop}[subsubsection]{Proposition}
\newtheorem{conj}[subsubsection]{Conjecture}
\theoremstyle{definition}
\newtheorem{defn}[subsubsection]{Definition}
\newtheorem{rem}[subsubsection]{Remark}
\newtheorem{exam}[subsubsection]{Example}
\newtheorem{exams}[subsubsection]{Examples}
\newtheorem{constr}[subsubsection]{Construction}
\newcommand{\constrref}[1]{Construction~\ref{#1}}
\newcommand{\thmref}[1]{Theorem~\ref{#1}}
\newcommand{\secref}[1]{Sect.~\ref{#1}}
\newcommand{\ssecref}[1]{Subsect. ~\ref{#1}}
\newcommand{\lemref}[1]{Lemma~\ref{#1}}
\newcommand{\propref}[1]{Proposition~\ref{#1}}
\newcommand{\corref}[1]{Corollary~\ref{#1}}
\newcommand{\conjref}[1]{Conjecture~\ref{#1}}
\newcommand{\remref}[1]{Remark~\ref{#1}}
\newcommand{\defref}[1]{Definition~\ref{#1}}
\renewcommand{\eqref}[1]{(\ref{#1})}
\newcommand{\examref}[1]{Example~\ref{#1}}
\numberwithin{equation}{subsection}
\newcommand{\changelocaltocdepth}[1]{%
  \addtocontents{toc}{\protect\setcounter{tocdepth}{#1}}%
  \setcounter{tocdepth}{#1}}
\newcommand{\nc}{\newcommand}
\nc{\renc}{\renewcommand}
\nc{\ssec}{\subsection}
\nc{\sssec}{\subsubsection}
\nc{\on}{\operatorname}
\nc{\term}[1]{#1\xspace}
\nc{\au}{\ensuremath{\mathcal{A}}\xspace}
\nc{\cu}{\ensuremath{\mathcal{C}}\xspace}
\nc{\du}{\ensuremath{\mathcal{D}}\xspace}
\nc{\hu}{\ensuremath{\mathcal{H}}\xspace}
\nc{\eu}{\ensuremath{\mathcal{E}}\xspace}
\nc{\sA}{\ensuremath{\mathcal{A}}\xspace}
\nc{\sB}{\ensuremath{\mathcal{B}}\xspace}
\nc{\sC}{\ensuremath{\mathcal{C}}\xspace}
\nc{\sD}{\ensuremath{\mathcal{D}}\xspace}
\nc{\sE}{\ensuremath{\mathcal{E}}\xspace}
\nc{\sF}{\ensuremath{\mathcal{F}}\xspace}
\nc{\sG}{\ensuremath{\mathcal{G}}\xspace}
\nc{\sH}{\ensuremath{\mathcal{H}}\xspace}
\nc{\sI}{\ensuremath{\mathcal{I}}\xspace}
\nc{\sJ}{\ensuremath{\mathcal{J}}\xspace}
\nc{\sK}{\ensuremath{\mathcal{K}}\xspace}
\nc{\sL}{\ensuremath{\mathcal{L}}\xspace}
\nc{\sM}{\ensuremath{\mathcal{M}}\xspace}
\nc{\sN}{\ensuremath{\mathcal{N}}\xspace}
\nc{\sO}{\ensuremath{\mathcal{O}}\xspace}
\nc{\sP}{\ensuremath{\mathcal{P}}\xspace}
\nc{\sQ}{\ensuremath{\mathcal{Q}}\xspace}
\nc{\sR}{\ensuremath{\mathcal{R}}\xspace}
\nc{\sS}{\ensuremath{\mathcal{S}}\xspace}
\nc{\sT}{\ensuremath{\mathcal{T}}\xspace}
\nc{\sU}{\ensuremath{\mathcal{U}}\xspace}
\nc{\sV}{\ensuremath{\mathcal{V}}\xspace}
\nc{\sW}{\ensuremath{\mathcal{W}}\xspace}
\nc{\sX}{\ensuremath{\mathcal{X}}\xspace}
\nc{\sY}{\ensuremath{\mathcal{Y}}\xspace}
\nc{\sZ}{\ensuremath{\mathcal{Z}}\xspace}
\nc{\bA}{\ensuremath{\mathbf{A}}\xspace}
\nc{\bB}{\ensuremath{\mathbf{B}}\xspace}
\nc{\bC}{\ensuremath{\mathbf{C}}\xspace}
\nc{\bD}{\ensuremath{\mathbf{D}}\xspace}
\nc{\bE}{\ensuremath{\mathbf{E}}\xspace}
\nc{\bF}{\ensuremath{\mathbf{F}}\xspace}
\nc{\bG}{\ensuremath{\mathbf{G}}\xspace}
\nc{\bH}{\ensuremath{\mathbf{H}}\xspace}
\nc{\bI}{\ensuremath{\mathbf{I}}\xspace}
\nc{\bJ}{\ensuremath{\mathbf{J}}\xspace}
\nc{\bK}{\ensuremath{\mathbf{K}}\xspace}
\nc{\bL}{\ensuremath{\mathbf{L}}\xspace}
\nc{\bM}{\ensuremath{\mathbf{M}}\xspace}
\nc{\bN}{\ensuremath{\mathbf{N}}\xspace}
\nc{\bO}{\ensuremath{\mathbf{O}}\xspace}
\nc{\bP}{\ensuremath{\mathbf{P}}\xspace}
\nc{\bQ}{\ensuremath{\mathbf{Q}}\xspace}
\nc{\bR}{\ensuremath{\mathbf{R}}\xspace}
\nc{\bS}{\ensuremath{\mathbf{S}}\xspace}
\nc{\bT}{\ensuremath{\mathbf{T}}\xspace}
\nc{\bU}{\ensuremath{\mathbf{U}}\xspace}
\nc{\bV}{\ensuremath{\mathbf{V}}\xspace}
\nc{\bW}{\ensuremath{\mathbf{W}}\xspace}
\nc{\bX}{\ensuremath{\mathbf{X}}\xspace}
\nc{\bY}{\ensuremath{\mathbf{Y}}\xspace}
\nc{\bZ}{\ensuremath{\mathbf{Z}}\xspace}
\nc{\dA}{\ensuremath{\mathds{A}}\xspace}
\nc{\dB}{\ensuremath{\mathds{B}}\xspace}
\nc{\dC}{\ensuremath{\mathds{C}}\xspace}
\nc{\dD}{\ensuremath{\mathds{D}}\xspace}
\nc{\dE}{\ensuremath{\mathds{E}}\xspace}
\nc{\dF}{\ensuremath{\mathds{F}}\xspace}
\nc{\dG}{\ensuremath{\mathds{G}}\xspace}
\nc{\dH}{\ensuremath{\mathds{H}}\xspace}
\nc{\dI}{\ensuremath{\mathds{I}}\xspace}
\nc{\dJ}{\ensuremath{\mathds{J}}\xspace}
\nc{\dK}{\ensuremath{\mathds{K}}\xspace}
\nc{\dL}{\ensuremath{\mathds{L}}\xspace}
\nc{\dM}{\ensuremath{\mathds{M}}\xspace}
\nc{\dN}{\ensuremath{\mathds{N}}\xspace}
\nc{\dO}{\ensuremath{\mathds{O}}\xspace}
\nc{\dP}{\ensuremath{\mathds{P}}\xspace}
\nc{\dQ}{\ensuremath{\mathds{Q}}\xspace}
\nc{\dR}{\ensuremath{\mathds{R}}\xspace}
\nc{\dS}{\ensuremath{\mathds{S}}\xspace}
\nc{\dT}{\ensuremath{\mathds{T}}\xspace}
\nc{\dU}{\ensuremath{\mathds{U}}\xspace}
\nc{\dV}{\ensuremath{\mathds{V}}\xspace}
\nc{\dW}{\ensuremath{\mathds{W}}\xspace}
\nc{\dX}{\ensuremath{\mathds{X}}\xspace}
\nc{\dY}{\ensuremath{\mathds{Y}}\xspace}
\nc{\dZ}{\ensuremath{\mathds{Z}}\xspace}
\nc{\bbA}{\ensuremath{\mathbb{A}}\xspace}
\nc{\bbB}{\ensuremath{\mathbb{B}}\xspace}
\nc{\bbC}{\ensuremath{\mathbb{C}}\xspace}
\nc{\bbD}{\ensuremath{\mathbb{D}}\xspace}
\nc{\bbE}{\ensuremath{\mathbb{E}}\xspace}
\nc{\bbF}{\ensuremath{\mathbb{F}}\xspace}
\nc{\bbG}{\ensuremath{\mathbb{G}}\xspace}
\nc{\bbH}{\ensuremath{\mathbb{H}}\xspace}
\nc{\bbI}{\ensuremath{\mathbb{I}}\xspace}
\nc{\bbJ}{\ensuremath{\mathbb{J}}\xspace}
\nc{\bbK}{\ensuremath{\mathbb{K}}\xspace}
\nc{\bbL}{\ensuremath{\mathbb{L}}\xspace}
\nc{\bbM}{\ensuremath{\mathbb{M}}\xspace}
\nc{\bbN}{\ensuremath{\mathbb{N}}\xspace}
\nc{\bbO}{\ensuremath{\mathbb{O}}\xspace}
\nc{\bbP}{\ensuremath{\mathbb{P}}\xspace}
\nc{\bbQ}{\ensuremath{\mathbb{Q}}\xspace}
\nc{\bbR}{\ensuremath{\mathbb{R}}\xspace}
\nc{\bbS}{\ensuremath{\mathbb{S}}\xspace}
\nc{\bbT}{\ensuremath{\mathbb{T}}\xspace}
\nc{\bbU}{\ensuremath{\mathbb{U}}\xspace}
\nc{\bbV}{\ensuremath{\mathbb{V}}\xspace}
\nc{\bbW}{\ensuremath{\mathbb{W}}\xspace}
\nc{\bbX}{\ensuremath{\mathbb{X}}\xspace}
\nc{\bbY}{\ensuremath{\mathbb{Y}}\xspace}
\nc{\bbZ}{\ensuremath{\mathbb{Z}}\xspace}
\nc{\mrm}[1]{\ensuremath{\mathrm{#1}}\xspace}
\nc{\mit}[1]{\ensuremath{\mathit{#1}}\xspace}
\nc{\mbf}[1]{\ensuremath{\mathbf{#1}}\xspace}
\nc{\mcal}[1]{\ensuremath{\mathcal{#1}}\xspace}
\nc{\msc}[1]{\ensuremath{\mathscr{#1}}\xspace}
\nc{\mfr}[1]{\ensuremath{\mathfrak{#1}}\xspace}
\renc{\bar}[1]{\overline{#1}}
\let\sectsign\S
\let\S\relax
\nc{\sub}{\subset}
\nc{\too}{\longrightarrow}
\nc{\hook}{\hookrightarrow}
\nc*{\hooklongrightarrow}{\ensuremath{\lhook\joinrel\relbar\joinrel\rightarrow}}
\nc{\hooklong}{\hooklongrightarrow}
\nc{\twoheadlongrightarrow}{\relbar\joinrel\twoheadrightarrow}
\nc{\shiso}{\approx}
\nc{\isoto}{\xrightarrow{\sim}}
\nc{\isofrom}{\xleftarrow{\sim}}
\renc{\ge}{\geqslant}
\renc{\le}{\leqslant}
\nc{\id}{\mathrm{id}}
\DeclareMathOperator{\Hom}{\on{Hom}}
\nc{\uHom}{\underline{\smash{\Hom}}}
\DeclareMathOperator{\Maps}{\on{Maps}}
\DeclareMathOperator{\End}{\on{End}}
\nc{\uEnd}{\underline{\smash{\End}}}
\nc{\colim}{\varinjlim}
\renc{\lim}{\varprojlim}
\nc{\Cofib}{\on{Cofib}}
\nc{\Fib}{\on{Fib}}
\nc{\initial}{\varnothing}
\nc{\op}{\mathrm{op}}
\nc{\Spc}{\mrm{Spc}}
\nc{\Spt}{\mrm{Spt}}
\nc{\Spec}{\on{Spec}}
\nc{\Stk}{\mrm{Stk}}
\nc{\Sch}{\mrm{Sch}}
\nc{\aff}{\mrm{aff}}
\nc{\A}{\mbf{A}}
\renc{\P}{\mbf{P}}
\nc{\cl}{{\mrm{cl}}}
\nc{\bDelta}{\mathbf{\Delta}}
\nc{\un}{\mathbf{1}}
\nc{\Tot}{\on{Tot}}
\nc{\Cech}{\textnormal{\v{C}}}
\nc{\Mod}{\mrm{Mod}}
\nc{\Qcoh}{\on{Qcoh}}
\nc{\free}{\mrm{free}}
\nc{\perf}{\mrm{perf}}
\nc{\aperf}{\mrm{aperf}}
\nc{\coh}{\mrm{coh}}
\nc{\Einfty}{{\sE_\infty}}
\nc{\modmod}{/\!\!/}
\nc{\heart}{\heartsuit}
\nc{\proj}{\mrm{proj}}
\nc{\K}{\on{K}}
\nc{\G}{\on{G}}
\nc{\GL}{\on{GL}}
\nc{\BGL}{\on{BGL}}
\nc{\M}{\on{M}}
\nc{\KH}{\on{KH}}
\nc{\CAlg}{\on{CAlg}}
\nc{\cn}{\mrm{cn}}
\nc{\hw}{\mrm{Hw}}
\nc{\htt}{\mrm{Ht}}
\nc{\Fun}{\on{Fun}}
\nc{\Funadd}{\on{Fun}_{\mrm{add}}}
\nc{\Funex}{\on{Fun}_{\mrm{ex}}}
\nc{\Ind}{\on{Ind}}
\nc{\scr}{\term{simplicial commutative ring}}
\nc{\scrs}{\term{simplicial commutative rings}}
\nc{\Einfring}{\term{$\Einfty$-ring}}
\nc{\Einfrings}{\term{$\Einfty$-rings}}
\nc{\Ering}{\term{$\sE_1$-ring}}
\nc{\Erings}{\term{$\sE_1$-rings}}
\nc{\inftyCat}{\term{$\infty$-category}}
\nc{\inftyCats}{\term{$\infty$-categories}}
\nc{\inftyTop}{\term{$\infty$-topos}}
\nc{\inftyTops}{\term{$\infty$-toposes}}
\nc{\inftyGrpd}{\term{$\infty$-groupoid}}
\nc{\inftyGrpds}{\term{$\infty$-groupoids}}
\title{Regularity of spectral~stacks and discreteness~of~weight-hearts}
\author[V. Sosnilo]{Vladimir Sosnilo}
\address{Laboratory of Modern Algebra and Applications, St. Petersburg State University, 14th line, 29B, 199178 Saint Petersburg, Russia}
\address{St. Petersburg Department of Steklov Mathematical Institute of Russian Academy of Sciences, Fontanka, 27, 191023 Saint Petersburg, Russia}
\email{\href{mailto:vsosnilo@gmail.com}{vsosnilo@gmail.com}}
\begin{document}

\begin{abstract}
We study regularity in the context of connective ring spectra and spectral stacks.
Parallel to that, we construct a weight structure on the category of compact quasi-coherent sheaves on spectral quotient stacks of the
form $X=[\Spec R/G]$ defined over a field, where $R$ is a connective
$\Einfty$-$k$-algebra and $G$ is a linearly reductive group acting on $R$.
Under reasonable assumptions we show that regularity of $X$ is equivalent to regularity of $R$. We also show that if $R$ is bounded, such a stack is discrete.
This result can be interpreted in terms of weight structures and suggests a general phenomenon: for a symmetric monoidal stable \inftyCat
with a compatible bounded weight structure, the existence of an adjacent t-structure satisfying a strong boundedness condition should imply
discreteness of the weight-heart.

We also prove a gluing result for weight structures and adjacent t-structures, in the setting of a semi-orthogonal decomposition of stable \inftyCats.
\end{abstract}

\maketitle

\setcounter{tocdepth}{1}
\parskip 0pt
\tableofcontents
\parskip 0.2cm


\section{Introduction}
\label{sec:intro}


Let $\sC$ be a stable \inftyCat.
The question of existence of a bounded t-structure on $\sC$ has been studied recently by several authors.
In \cite{AGH} it was proved that if such a t-structure exists, the negative K-group $\K_{-1}(\sC)$ must vanish.
In this paper, we are interested in this question under the additional assumption that $\sC$ is equipped with a bounded weight structure.
In this case the vanishing of $\K_{-1}(\sC)$ is equivalent to the vanishing of $\K_{-1}$ of the heart of the weight structure, by \cite{Vovastheoremoftheheart}.
Moreover, any bounded weight structure on $\sC$ gives rise to a natural t-structure on its ind-completion $\Ind(\sC)$.
We may thus ask when this t-structure restricts to $\sC$.
When this is the case, we say the restricted t-structure is \emph{adjacent} to the weight structure.
The simplest example of this phenomenon is for the \inftyCat of perfect complexes over a regular ring, which admits a natural weight structure whose non-negative part is given by the full subcategory of complexes of finitely generated projective modules concentrated in non-negative degrees.

If the heart of the bounded weight structure on $\sC$ is generated by a single object whose endomorphism ring spectrum $R$ is noetherian, then $\sC$ admits an adjacent t-structure if and only if the ring spectrum $R$ is \emph{regular}.
The latter property was studied in \cite{BarwickHeart} and \cite{BarwickLawson} (under the name ``almost regular''), 
and many examples such as $\operatorname{ko}$, $\operatorname{ku}$, $\operatorname{BP}\langle n \rangle$, and $\operatorname{tmf}$ were introduced.
Returning to the original question, one might also wonder whether the adjacent t-structure ever happens to be bounded.
It turns out this can only happen either if $R$ is not quasi-commutative or if $R \simeq \pi_0(R)$ is a discrete regular ring (\thmref{regdiscrete}).

Further examples of adjacent t-structures arise by considering certain spectral stacks.
Namely, consider the stable \inftyCat $\sC$ of quasi-coherent sheaves on the spectral quotient stack\linebreak
$[\Spec (R)/G]$, where $R$ is a noetherian $\Einfty$-ring spectrum over a field $k$ with $\pi_0(R)$ of finite type, and $G$ is a linearly reductive algebraic group over $k$ acting on $R$.
We show that $\sC$ admits a weight structure whose heart is generated by the pullbacks of representations from $BG$ (\thmref{StacksWeightStructure}).
We define a notion of \emph{homological regularity} of spectral stacks that generalizes regularity of ring spectra, and for spectral quotient stacks as above it captures precisely whether an adjacent t-structure exists on the subcategory of compact objects in $\sC$.
Under minor assumptions we prove that homological regularity of $[\Spec (R)/G]$ is equivalent to regularity
of $R$ (\thmref{regularityQuotientStack}).
In particular, if $R$ is bounded, then the spectral stack is automatically a discrete quotient stack (\corref{cor:discreteness stacks}).
The boundedness of $R$ here corresponds to a strengthening of the boundedness assumption on the adjacent t-structure on $\sC$, namely, that all objects of the heart of the weight structure on $\sC$ are $N$-truncated for some large enough $N$.

According to the Tannakian formalism developed in \cite{Iwanari}, any symmetric monoidal stable $\infty$-category $\sC^{\otimes}$ satisfying certain assumptions is symmetric monoidally equivalent to the \inftyCat of quasi-coherent sheaves on a spectral quotient stack.
This leads us to conjecture that if $\sC^{\otimes}$ is any symmetric monoidal stable \inftyCat with a compatible weight structure, then existence of a bounded adjacent t-structure, satisfying the strong boundedness condition mentioned above, implies discreteness of the heart of the weight structure (\conjref{mainth}).
In particular, this implies $\sC$ is equivalent both to the derived \inftyCat of its t-heart, and to the \inftyCat of bounded complexes over its weight-heart.
Our results on spectral stacks verify this conjecture under certain conditions on $\sC$ (\remref{rem:Iwanari}).

We end the paper by discussing the ``non-commutative'' examples of stable $\infty$-categories where an adjacent t-structure does exist.
Under appropriate necessary compatibility conditions we prove that one can glue together a weight structure from weight structures on a
semi-orthogonal decomposition, as well as an adjacent t-structure from adjacent t-structures on the semi-orthogonal decomposition. This shows in particular that triangular matrix $\Einfty$-rings are regular
simiarly to the classical triangular matrix rings.

\ssec*{Acknowledgments}
Adeel Khan participated in the development of the results and the ideas of the paper as
much as the current author, but withdrew his co-authorship for personal reasons.
We are deeply grateful to Charanya Ravi and the anonymous referee for proofreading a preliminary version of the paper and pointing out issues
in Proposition~\ref{localization} and Theorem~\ref{regularityQuotientStack}.
Furthermore, we would like to thank Benjamin Antieau, Mikhail Bondarko, Denis-Charles Cisinski, and Liran Shaul for other interesting comments.
The author was supported by the grant of the Government of the
Russian Federation for the state support of scientific research carried out under the supervision of leading scientists, agreement 14.W03.31.0030 dated 15.02.2018. The work was additionally supported by Ministry of Science and Higher Education of the Russian Federation, agreement 075–15–2019–1619.

\changelocaltocdepth{2}

\section{Regular ring spectra}
\label{sec:discretenessandregularity}

\ssec{Preliminaries}

Let $R$ be a connective \Ering, and write $\Mod_R$ for the \inftyCat of left $R$-modules.
We recall a few finiteness conditions from \cite[\sectsign~7.2.4]{HA-20170918}.

\sssec{}

The \emph{perfect} left $R$-modules are those built out of $R$ under finite (co)limits and direct summands.
These span a thick subcategory $\Mod_R^\perf \subset \Mod_R$.

\sssec{}

Suppose that $R$ is \emph{left noetherian}, i.e. that $\pi_0(R)$ is left noetherian as an ordinary ring and the homotopy groups $\pi_i(R)$ are finitely generated as left $\pi_0(R)$-modules.
A left $R$-module $M$ is \emph{almost perfect} if it is bounded below and each $\pi_i(M)$ is finitely generated as a left $\pi_0(R)$-module \cite[Prop.~7.2.4.17]{HA-20170918}, and \emph{coherent} if it is almost perfect and bounded above.
By \cite[Prop.~7.2.4.23(4)]{HA-20170918}, an almost perfect $R$-module is perfect if and only if it is of finite tor-amplitude.
We write $\Mod_R^\coh$ for the full subcategory of $\Mod_R$ spanned by coherent left $R$-modules; this is also a thick subcategory by \cite[Prop.~7.2.4.11]{HA-20170918}.

\begin{rem}
Note that the noetherian hypothesis guarantees that $R$ is almost perfect as a left $R$-module.
Thus if $R$ is bounded above, then it is moreover coherent as a left $R$-module.
It follows then that every perfect left $R$-module is coherent, i.e., that there is an inclusion $\Mod_R^\perf \subset \Mod_R^\coh$ when $R$ is bounded above.
\end{rem}

The following definition can be found in \cite{BarwickLawson}, where it is called ``almost regularity''.

\begin{defn}
Let $R$ be a left noetherian connective \Ering.
We will say that $R$ is \emph{regular} if every coherent left $R$-module is perfect.
\end{defn}

It suffices to require that every \emph{discrete} coherent left $R$-module is perfect, in view of the exact triangles
  \begin{equation*}
   \Sigma^i\pi_i(M) \to \tau_{\le i}(M) \to \tau_{\le i-1}(M)
  \end{equation*}
for $M\in\Mod_R$ and $i\in\bZ$.
Moreover, one has the following useful criterion (see \cite[Prop.~1.3]{BarwickLawson}):

\begin{prop}\label{regcriterion}
Let $R$ be a left noetherian connective $\sE_1$-ring such that $\pi_0(R)$ is a regular commutative ring.
Then $R$ is regular if and only if $\pi_0(R)$ is perfect as an $R$-module.
\end{prop}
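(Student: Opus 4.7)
The plan is to prove the two implications separately, using the reduction mentioned just above the proposition to discrete coherent modules.

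For the forward direction, suppose $R$ is regular. I would observe that $\pi_0(R)$, viewed as a left $R$-module via the augmentation $R \to \pi_0(R)$, is coherent: it is discrete (hence bounded above and below) and its only nonzero homotopy group is $\pi_0(R)$ itself, which is finitely generated over the noetherian ring $\pi_0(R)$. Regularity of $R$ then immediately gives that $\pi_0(R) \in \Mod_R^{\perf}$.

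For the converse, assume $\pi_0(R)$ is perfect as an $R$-module. By the remark about the Postnikov triangles $\Sigma^i \pi_i(M) \to \tau_{\le i}(M) \to \tau_{\le i-1}(M)$, it suffices to show that every discrete coherent left $R$-module $M$ is perfect. Such an $M$ is exactly a finitely generated $\pi_0(R)$-module. Since $\pi_0(R)$ is a (discrete) regular commutative noetherian ring, $M$ admits a finite resolution by finitely generated projective $\pi_0(R)$-modules, and is therefore a perfect object of $\Mod_{\pi_0(R)}$, i.e., lies in the thick subcategory generated by $\pi_0(R)$ inside $\Mod_{\pi_0(R)}$.

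The final step is to transport this along the forgetful functor $\Mod_{\pi_0(R)} \to \Mod_R$. This functor is exact and preserves retracts, so it sends the thick subcategory of $\Mod_{\pi_0(R)}$ generated by $\pi_0(R)$ into the thick subcategory of $\Mod_R$ generated by the image of $\pi_0(R)$, which is just $\pi_0(R)$ viewed as an $R$-module. By hypothesis this object lies in $\Mod_R^{\perf}$, which is itself a thick subcategory, so $M \in \Mod_R^{\perf}$ as desired. The only point that needs care is the thickness/closure-under-retracts step, and this is immediate from exactness of the forgetful functor; there is no real obstacle beyond bookkeeping.
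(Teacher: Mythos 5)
The paper does not prove this proposition itself; it simply cites \cite[Prop.~1.3]{BarwickLawson}. Your argument is correct and is the natural one: the forward direction is immediate once one observes that $\pi_0(R)$, regarded as a left $R$-module via $R \to \pi_0(R)$, is coherent; for the converse, after the reduction to discrete coherent modules recorded just above the proposition, one identifies such a module with a finitely generated $\pi_0(R)$-module $M$, uses classical regularity of $\pi_0(R)$ to place $M$ in the thick subcategory of $\Mod_{\pi_0(R)}$ generated by $\pi_0(R)$, and then applies restriction of scalars along $R \to \pi_0(R)$ (which is exact, having both a left adjoint $\pi_0(R)\otimes_R -$ and a right adjoint) to land in the thick subcategory of $\Mod_R$ generated by $\pi_0(R)$, which by hypothesis is contained in $\Mod_R^{\perf}$. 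No gaps.
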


\begin{lem}\label{lem:pi_0(R)/m criterion for regularity}
Let $R$ be a left noetherian connective $\sE_1$-ring such that $(\pi_0(R), \mfr{m})$ is a local commutative ring.
Suppose that for every discrete coherent right $R$-module $M$, the tensor product $M \otimes_R \pi_0(R)/\mfr{m}$ is bounded.
Then $R$ is regular.
\end{lem}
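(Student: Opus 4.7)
By the remark preceding \propref{regcriterion}, to prove $R$ is regular it suffices to show every discrete coherent left $R$-module $N$ is perfect, and by \cite[Prop.~8.2.5.23(4)]{SAG-20180204} this reduces to verifying that $N$ has finite Tor-amplitude. The plan is to establish this via a local flatness criterion, combined with a bridging argument that shows the residue field $k := \pi_0(R)/\mfr{m}$, viewed as a right $R$-module via $R \to \pi_0(R) \to k$, is perfect as a right $R$-module.

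The local flatness criterion I would prove is: any almost perfect left $R$-module $N$ with $k \otimes_R^L N$ bounded is perfect. Starting from a resolution $F_\bullet \to N$ by finitely generated free left $R$-modules (available by almost perfectness), form syzygies $Z_0 = N$, $Z_{n+1} = \ker(F_n \to Z_n)$. Each $Z_n$ is almost perfect, and the dimension shift $\mathrm{Tor}^R_i(k, Z_n) \cong \mathrm{Tor}^R_{i+n}(k, N)$ for $i \ge 1$ (extracted from the short exact sequences $0 \to Z_{n+1} \to F_n \to Z_n \to 0$ together with projectivity of $F_n$) forces $k \otimes_R^L Z_n$ to be discrete for $n$ sufficiently large. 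A Nakayama argument--lifting a $k$-basis of $\pi_0(Z_n)/\mfr{m}\pi_0(Z_n)$ to a surjection $R^m \twoheadrightarrow Z_n$, then running iterated Nakayama on the homotopy groups of the fiber $K$ via the identification $\pi_0(K)/\mfr{m}\pi_0(K) \cong \pi_0(K \otimes_R^L k)$--yields that $Z_n$ is finitely generated free, giving the desired finite free resolution of $N$.

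For the bridging step, note that $k$ is itself a discrete coherent right $R$-module, since $\pi_0(R)$ is noetherian and $k$ is finitely generated as a right $\pi_0(R)$-module. The hypothesis applied to $M = k$ therefore yields that $k \otimes_R^L k$ is bounded. Rerunning the syzygy-and-Nakayama argument above for right $R$-modules then produces a finite free resolution of $k$ as a right $R$-module, so $k$ has finite Tor-amplitude, and consequently $k \otimes_R^L N$ is bounded for every bounded left $R$-module $N$ (in particular for every discrete coherent $N$). Combining this with the local criterion yields regularity of $R$.

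The main obstacle is the Nakayama step in the local criterion: verifying rigorously that, over a connective local $\sE_1$-ring, an almost perfect module whose derived tensor product with the residue field vanishes must itself be zero. This uses the universal-coefficient calculation of $\pi_*(M \otimes_R^L k)$ in terms of the $\pi_*(M)$ and classical Nakayama, and once it is in place the rest of the proof is a routine homological chase through syzygies.
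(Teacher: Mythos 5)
Your overall strategy is sound and the ``local flatness criterion'' you isolate (over a connective $R$ with $\pi_0(R)$ local noetherian, an almost perfect left module $N$ with $k\otimes_R N$ bounded is perfect, via syzygies plus Nakayama) is correct and is in fact hiding in the paper's own argument. The paper instead argues by contradiction: it filters the problematic left module $N$ by prime cyclic quotients $\pi_0(R)/\mathfrak{p}_i$, deduces that $M\otimes_R \pi_0(R)/\mathfrak{p}_i$ is unbounded for some $i$, and then applies the classical theory of \emph{minimal free resolutions over the discrete local ring $\pi_0(R)/\mathfrak{p}_i$} (Roberts/Serre) to conclude that $M\otimes_R k$ is unbounded, contradicting the hypothesis. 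So your route and the paper's route genuinely diverge at the bridging step.

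However, your bridging step has a gap that the paper's route is specifically designed to avoid. You claim that ``rerunning the syzygy-and-Nakayama argument for right $R$-modules'' shows $k$ is perfect (hence of finite Tor-amplitude) as a \emph{right} $R$-module. To run this argument you need the syzygies of $k$, taken over $R$ on the right, to have finitely generated homotopy groups over $\pi_0(R)$ so that classical Nakayama applies in each degree. But the first syzygy $Z_1 = \operatorname{fib}(R\to k)$ already has $\pi_i(Z_1)\cong\pi_i(R)$ for $i\geqslant 1$, which the hypotheses only guarantee to be finitely generated as a \emph{left} $\pi_0(R)$-module. The lemma does not assume $R$ is right noetherian or quasi-commutative, and for a general $\sE_1$-ring the left and right $\pi_0(R)$-actions on $\pi_i(R)$ need not coincide, so the Nakayama step on $\pi_j(K)$ for $j\geqslant 1$ is unjustified. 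The paper avoids this entirely by never resolving anything over $R$ from the right: it passes to the \emph{discrete commutative} ring $\pi_0(R)/\mathfrak{p}_i$ (where there is no left/right asymmetry), uses left noetherianness of $R$ only to control the homotopy of $M\otimes_R \pi_0(R)/\mathfrak{p}_i$, and lets the minimality of the resolution over $\pi_0(R)/\mathfrak{p}_i$ produce the contradiction. Your proof would be correct under the additional hypothesis that $R$ is right noetherian (or quasi-commutative, e.g.\ $\sE_\infty$, which covers the paper's applications), but as a proof of the lemma as stated it is incomplete.
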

\begin{proof}
It suffices to show that any discrete coherent right $R$-module $M$ is of finite tor-amplitude.
By our assumption, $M \otimes \pi_0(R)/\mfr{m}$ is $k$-truncated for some $k$.
For any prime ideal $\mfr{p} \subset \pi_0(R)$ there exists a  minimal free resolution of the $\pi_0(R)/\mfr{p}$-module $\tilde{M} := M \otimes_R \pi_0(R)/\mfr{p}$ by \cite[Theorem~2.4]{Roberts} (cf.~\cite[Appendix~I, Prop.~1(a)]{Serre}).
That is, there is a quasi-isomorphism $F \to \tilde{M}$ (where we regard $\tilde{M}$ as a chain complex of discrete $\pi_0(R)/\mfr{p}_i$-modules), where $F$ is a complex of finite free $\pi_0(R)/\mfr{p}_i$-modules whose differentials are defined by matrices with coefficients in $\mfr{m}$.
The homotopy groups
  \begin{equation*}
    \pi_j \big( F \otimes_{\pi_0(R)/\mfr{p}} \pi_0(R)/\mfr{m} \big)
      = \pi_j \big( \tilde{M} \otimes_{\pi_0(R)/\mfr{p}} \pi_0(R)/\mfr{m} \big)
      = \pi_j \big( M \otimes_R \pi_0(R)/\mfr{m} \big)
  \end{equation*}
vanish for $j>k$, hence all terms of $F$ must be zero in degrees higher than $k$.
This implies that that $M \otimes_R \pi_0(R)/\mfr{p}$ is $k$-truncated.

Now any discrete coherent left $R$-module $N$ admits a filtration
$$0 = N_0 \subset \cdots \subset N_n = N $$
such that the associated graded modules $N_i/N_{i-1}$ are of the form $\pi_0(R)/\mfr{p}_i$ for some prime ideals $\mfr{p}_i$.
Considering the exact sequences
  $$ N_{i-1} \to N_i \to \pi_0(R)/\mfr{p}_i $$
and using the fact that $M \otimes_R \pi_0(R)/\mfr{p}_i$ is $k$-truncated for all $i$,
we see that $M \otimes N$ is $k$-truncated as well. Thus $M$ has finite tor-amplitude, as desired.
\end{proof}

\begin{rem}\label{rem:regularity and t-structure}
Let $R$ be a 
connective $\sE_1$-ring.
The \inftyCat $\Mod_R$ admits a canonical t-structure, where the non-negative part $(\Mod_R)_{t\ge 0}$ is spanned by the connective modules.
If $R$ is left noetherian this always restricts to the full subcategories of almost perfect and coherent modules \cite[Prop.~7.2.4.18]{HA-20170918}.
One might ask whether it also restricts to the full subcategory $\Mod_R^{\perf}$ of perfect $R$-modules.
This is the case if and only if $\pi_i(M)$ belongs to $\Mod_R^{\perf}$ for any $M \in \Mod_R^{\perf}$. When $R$ is left noetherian this is equivalent to $R$ being regular.
\end{rem}

\begin{rem}
For every $M\in \Mod_R^\perf$ and $N \in \Mod_R^\coh$, there exists an integer $n$ such that the mapping space $\Mod_R(\Sigma^n M,N)$ is contractible. Indeed, this is true for $M=\Sigma^jR$ since $N$ is bounded above, and therefore for an arbitary $M$, since it is built out of $\Sigma^jR$ using finite colimits and retracts.
For regular $R$, this says that the space $\Mod_R(M,N)$ has finitely many non-zero homotopy groups, for any $M,N \in \Mod_R^{\perf}$.
\end{rem}

\begin{exams}\label{exams:regular}
\noindent{(i)}
If $R$ is discrete, then it is regular if and only if it is regular in the sense of ordinary commutative algebra.
This follows from Auslander-Buchsbaum-Serre theorem.

\noindent{(ii)}
Let $k$ be a regular commutative ring and $R = k[T]$ with $T$ in degree 2.
Then $R$ is regular.

\noindent{(iii)}
The $\Einfty$-ring spectra $\mrm{ku}$, $\mrm{ko}$, and $\mrm{tmf}$ are regular \cite{BarwickLawson}.
\end{exams}

\sssec{}

We say that $R$ is \emph{quasi-commutative} if, for every $x \in \pi_0(R)$ and $y \in \pi_n(R)$, the equality $xy = yx$ holds in $\pi_n(R)$.
Under this assumption, we can show that regularity is stable under localizations:

\begin{prop}\label{localization}
Let $R$ be a quasi-commutative connective $\sE_1$-ring spectrum which is left noetherian.

\noindent{\em(i)}
If $R$ is regular, then for any set $S \subset \pi_0(R)$, the localization $S^{-1}R$ is also regular.

\noindent{\em(ii)}
If the localization $R_{\mfr{m}}$ is regular for every maximal ideal $\mfr{m} \subset \pi_0(R)$ and
furthermore $\pi_*(R)$ is a left noetherian graded ring, then $R$ is regular.
\end{prop}

To prove the result we need two additional lemmas.

\begin{lem}\label{lifting_maps}
Let $R$ be a quasi-commutative connective $\sE_1$-ring spectrum which is left noetherian.
Let $M$ be an $R$-module and ${\mfr{m}}$ be a maximal ideal in $\pi_0(R)$.

Then any map $P' \to M_{\mfr{m}}$ in $\Mod_{R_{\mfr{m}}}$ with $P' \in \Mod^{perf}_{R_{\mfr{m}}}$ is equivalent to
$P_{\mfr{m}} \stackrel{f_{\mfr{m}}}\to M_{\mfr{m}}$ for some  $P\in \Mod_R^{perf}$ and $f\in \Mod_R(P,M)$.
\end{lem}
\begin{proof}
The functor
$$\Mod_R \stackrel{(-)_{\mfr{m}}}\to \Mod_{R_{\mfr{m}}}$$
is a localization whose kernel $K$ is compactly generated
(see \cite[Lemma~7.2.3.13]{HA-20170918}).
Hence, this functor restricts to a fully faithful embedding
$$\Mod_R^\perf/(K\cap \Mod_R^\perf) \stackrel{L}\to \Mod_{R_{\mfr{m}}}^\perf$$
which is an equivalence up to idempotent completion.
Since $\pi_0(R)_{\mfr{m}}$ is a local ring,
$K_0(R_{\mfr{m}}) = K_0(\pi_0(R_{\mfr{m}})) = \mathbb{Z}$ and so
the functor $L$ induces an isomorphism on $K_0$.
Now by \cite[Theorem~2.1]{ThomasonSubcategories} $L$ is an
equivalence and hence there exists $P'' \in \Mod_{R_{\mfr{m}}}^\perf$ such that $P''_{\mfr{m}} \cong P'$.

Any morphism $f' \in \Mod_{R_{\mfr{m}}}(P, M_{\mfr{m}}) \cong \Mod_{R}/K(P'', M)$ can be represented by a zig-zag of morphisms in $\Mod_R$
$$P'' \stackrel{s}\leftarrow P \stackrel{f}\to M$$
where $\Cofib(s) \in K$. Since $P''$ is compact, and $K$ is compactly
generated we can assume $\Cofib(s)$ and $P$ to be compact.
This concludes the proof since by construction $f_{\mfr{m}}$ is equivalent to
$f'$ and $s$ becomes an isomorphism in $\Mod_{R_{\mfr{m}}}$.
\end{proof}

\begin{lem}\label{finite_generation_pistar}
Let $R$ be a connective $\sE_1$-ring spectrum such that $\pi_*(R)$ is left noetherian graded ring.
Then $\pi_*(M)$ is a finitely generated $\pi_*(R)$-module for any $P \in \Mod_R^{perf}$.
\end{lem}
\begin{proof}
The claim is true for $M=\Sigma^nR$ for $n\in \mathbb{Z}$.
For a fiber sequence $M_1 \to M_2 \to M$ we have
an exact sequence of graded $\pi_*(R)$-modules
$$\pi_*(M_2) \to \pi_*(M) \to \pi_{*-1}(M_1).$$
Since $\pi_*(R)$ is noetherian, this implies that $\pi_*(M)$ is finitely generated whenever
$\pi_*(M_1)$ and $\pi_*(M_2)$ are.
Moreover, the property of having finitely generated $\pi_*$ is closed under retracts, therefore it holds for all $M\in \Mod^{perf}_{R}$ by
definition of this category.
\end{proof}

\begin{proof}[proof of Proposition~\ref{localization}]
For the purposes of the proof given a subset $S \subset \pi_0(R)$ we denote the extension of scalars
functor
$\Mod_R \to \Mod_{S^{-1}R}$ by $L_S$.

\noindent{(i)}
It will suffice to show that, for every perfect $S^{-1}R$-module $N$, we have $\pi_0(N) \in \Mod^\perf_{S^{-1}R}$.
Since any perfect $S^{-1}R$-module $N$ is a retract of an object of the form $L_S(M)$ for some $M \in \Mod^\perf_R$, we may restrict our attention to $N = L_S(M)$.
For $M \in \Mod^\perf_R$, it follows from \cite[Prop.~7.2.3.20]{HA-20170918} that we have
  $$\pi_n(L_SM) \simeq S^{-1}\pi_n(M) \simeq \pi_n(M) \otimes_{\pi_0(R)} S^{-1}\pi_0(R).$$
This implies in particular that $\pi_0(L_SM) \simeq L(\pi_0(M))$ belongs to $\Mod^\perf_{S^{-1}R}$, as claimed.

\noindent{(ii)}
Let $M$ be a discrete coherent $R$-module.
By the assumption $M_{\mfr{m}}$ is perfect for any maximal ideal $\mfr{m} \subset \pi_0(R)$. Applying Lemma~\ref{lifting_maps} to the identity map on
$M_{\mfr{m}}$ we see that there is $P \in \Mod^{perf}_R$ and a map $P \stackrel{f}\to M$ that becomes an equivalence
after localizing at $\mfr{m}$.
For any $S \subset \pi_0(R)$ the map of finitely generated $\pi_*(R)$-modules
$$\pi_*(P) \stackrel{\pi_*(f)}\to \pi_*(M)$$
can be identified with
$$\pi_*(L_SP) \stackrel{\pi_*(L_Sf)}\to \pi_*(L_SM)$$
after localizing at $S$ considered as a subset of $\pi_*(R)$ by \cite[Prop.~7.2.3.20]{HA-20170918}. Therefore it becomes an isomorphism after inverting $(\pi_0(R)\setminus\mfr{m})\subset \pi_*(R)$.
By Lemma~\ref{finite_generation_pistar} $\pi_*(P)$ is a finitely generated graded $\pi_*(R)$-module. The same is true for $\pi_*(M)$, since it is a finitely generated discrete $\pi_0(R)$-module.
Hence we can find an element $x_\mfr{m} \in \pi_0(R)\setminus\mfr{m}$
such that
$$\pi_*(L_{\{x_\mfr{m}\}}P) \stackrel{\pi_*(L_{\{x_\mfr{m}\}}f)}\longrightarrow \pi_*(L_{\{x_\mfr{m}\}}M)$$
is also an isomorphism. Therefore $L_{\{x_\mfr{m}\}}M$ is perfect.
Now the ideal generated by $x_\mfr{m}$ for all $\mfr{m}$ is the unit ideal since it is not contained in
any maximal ideal.
Pick any finite set $\{x_{\mrm{m}_1},\cdots,x_{\mrm{m}_n}\}$ that also generates the unit
ideal. The family $\{R \to R_{x_{\mrm{m}_i}}\}_{i=1}^n$ is a fpqc cover, so by
\cite[Proposition~2.8.4.2(10)]{SAG-20180204} $M$ is perfect.
\end{proof}

\ssec{Bounded regular ring spectra}

This subsection is dedicated to the following result, which was previously noted for dg-algebras by
J{\o}rgensen in \cite[Theorem~A]{J_rgensen_2010} and for $\sE_\infty$-ring spectra by Lurie in
\cite[Lemma~11.3.3.3]{SAG-20180204}.
Here we only note that the same proof works more generally for quasi-commutative $\sE_1$-ring spectra.
Note that this is optimal in the sense that the statement is false without the quasi-commutativity assumption (see \ssecref{ncmcase}).

\begin{thm}\label{regdiscrete}
Let $R$ be a quasi-commutative connective $\sE_1$-ring spectrum which is left noetherian.
If $R$ is regular and has only finitely many nontrivial homotopy groups, then $R$ is discrete.
\end{thm}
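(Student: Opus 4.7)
The plan is to follow the argument of \cite[Lemma~11.3.3.3]{SAG-20180204}, verifying that only quasi-commutativity of $R$ (rather than the full $\sE_\infty$-structure) is required. First, I would reduce to the case where $\pi_0(R)$ is a local commutative ring by invoking \lemref{localization}: each localization $R_{\mfr m}$ at a maximal ideal $\mfr m \subset \pi_0(R)$ remains quasi-commutative, connective, left noetherian, bounded, and regular; and since $\pi_i(R_{\mfr m}) = \pi_i(R)_{\mfr m}$, discreteness of every $R_{\mfr m}$ implies discreteness of $R$.

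With $\pi_0(R)$ local, set $k := \pi_0(R)/\mfr m$. Since $R$ is regular, the residue field $k$---which is discrete, finitely generated over $\pi_0(R)$, and hence a coherent $R$-module---lies in $\Mod_R^\perf$. Consequently $k \otimes_R k$ is a perfect $R$-module and, in particular, has only finitely many non-zero homotopy groups.

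Now suppose for contradiction that $R$ is not discrete, and let $n \ge 1$ be the largest index with $V := \pi_n(R) \neq 0$. By Nakayama (applied to the non-zero finitely generated $\pi_0(R)$-module $V$), one can pick $x \in V$ with non-zero image in $V/\mfr m V$. Maximality of $n$ gives $x \cdot x \in \pi_{2n}(R) = 0$, and quasi-commutativity ensures that $\pi_0(R)$ sits in the graded center of $\pi_*(R)$; together these make $x$ a square-zero central element of positive internal degree in the graded ring $\pi_*(R)$. I would then invoke the standard Tor spectral sequence
\begin{equation*}
E_2^{p,q} = \on{Tor}^{\pi_*(R)}_{p,q}(k, k) \Longrightarrow \pi_{p+q}(k \otimes_R k)
\end{equation*}
and exhibit a polynomial family of non-zero classes $y^i \in E_2^{i, in}$ for all $i \ge 0$, coming from the Koszul subcomplex generated by $x$. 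A spectral sequence analysis as in Lurie's proof shows that these classes survive to $E_\infty$, contributing non-zero elements to $\pi_{i(n+1)}(k \otimes_R k)$ for all $i \ge 0$. This contradicts the boundedness established above, completing the proof.

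The main obstacle is constructing the Koszul family and verifying its survival in the Tor spectral sequence---the technical heart of Lurie's argument. Quasi-commutativity is precisely the structural input required: it ensures that the square-zero element $x \in \pi_n(R)$ is central in $\pi_*(R)$, so that the Koszul-type resolution $\cdots \xrightarrow{\,x\,} \pi_*(R) \xrightarrow{\,x\,} \pi_*(R)$ is well-defined and contributes the desired polynomial factor to $\on{Tor}^{\pi_*(R)}_*(k, k)$, exactly as in the $\sE_\infty$-setting of \cite[Lemma~11.3.3.3]{SAG-20180204}. All other steps in Lurie's proof are formal and carry through to the $\sE_1$-case unchanged.
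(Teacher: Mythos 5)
Your reduction to the local case and the observation that regularity forces $k := \pi_0(R)/\mfr m$ (and hence $k\otimes_R k$) to be perfect are correct, and the remark that the top-degree class $x\in\pi_n(R)$ is automatically central (products with positive-degree classes land in $\pi_{>n}(R)=0$, while commuting with $\pi_0(R)$ is precisely quasi-commutativity) is a nice point. However, the heart of your argument does not go through as written, and it also diverges from Lurie's actual proof, so the appeal to ``as in Lurie's proof'' for the survival of the Koszul classes is misplaced: \cite[Lemma~11.3.3.3]{SAG-20180204} (and the proof in this paper, which adapts it) proceeds by induction on Tor-amplitude using minimal free presentations, not via the Tor spectral sequence.

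The concrete gaps are the following. First, the element $x$ you produce via Nakayama is only guaranteed to be nonzero in $\pi_n(R)/\mfr m\,\pi_n(R)$. But the class $[x]\in\on{Tor}^{\pi_*(R)}_1(k,k)\cong \mfr M/\mfr M^2$, where $\mfr M=\mfr m\oplus\bigoplus_{i>0}\pi_i(R)$, vanishes as soon as $x$ lies in $\mfr M^2$; and $\mfr M^2\cap\pi_n(R)$ can be strictly larger than $\mfr m\,\pi_n(R)$, since it also contains all products $ab$ with $a\in\pi_i(R)$, $b\in\pi_{n-i}(R)$, $0<i<n$. For example if $\pi_*(R)\cong k[a]/a^3$ with $\abs a=1$ then $\pi_2(R)=\mfr M^2$ and your $[x]$ is already zero in $\on{Tor}_1$; the square-zero elements of top internal degree simply need not generate a nonzero class. (This example is not regular, so it does not contradict the theorem, but it does show the construction of the polynomial family fails.) Second, even granting $[x]\ne 0$ and an algebra structure on $\on{Tor}^{\pi_*(R)}_*(k,k)$ (itself not automatic for an $\sE_1$-ring whose $\pi_*$ is not graded-commutative), you do not rule out that the classes $y^i$ are hit by differentials $d_r\colon E_r^{i+r,\,in-r+1}\to E_r^{i,\,in}$; the vanishing-line argument that would kill outgoing differentials does not apply to incoming ones, because $\mfr M$ has degree-zero elements. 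The paper's argument circumvents both issues: after a further localization ensuring $\mfr m$ is an associated prime of $\pi_n(R)$, it picks $x$ with $\mfr m\cdot x=0$ (not merely $x\notin\mfr m\,\pi_n(R)$), and then proves directly, by induction on Tor-amplitude using minimal presentations $N'\to R^{\oplus m}\to N$, that any nonzero connective perfect module $N$ of minimal Tor-amplitude $k$ has $\pi_{n+k}(N)\ne 0$ -- applied to $N=k$ this gives the contradiction without any spectral sequence.
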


\begin{proof}
We can clearly assume that $R$ is nonzero.
To show that $R$ is discrete, it will suffice to show that its localization $R_{\mathfrak{p}}$, at any prime ideal $\mathfrak{p} \subset \pi_0(R)$, is discrete.
Since $R_{\mathfrak{p}}$ is regular by Proposition~\ref{localization}, we may replace $R$ by $R_{\mathfrak{p}}$ and thereby assume that $\pi_0(R)$ is local.
Denote by $\mfr{m} \subset \pi_0(R)$ the maximal ideal, and $\kappa$ the residue field.
Let $n \ge 0$ be the largest integer such that $\pi_{n}(R) \ne 0$.
For the sake of contradiction, we will assume that $n>0$.

Since $R$ is noetherian, the $\pi_0(R)$-module $\pi_n(R)$ is finitely generated.
Let $\mfr{p} \subset \pi_0(R)$ be a minimal prime ideal such that $\mfr{p} \in \on{Supp}_{\pi_0(R)}(\pi_n(R))$.
By Proposition~\ref{localization}, the localization $R_{\mathfrak{p}}$ is regular.
Therefore, replacing $R$ by $R_{\mathfrak{p}}$, we may assume that $\pi_n(R)$ is supported at $\mfr{m}$, so that in particular $\mfr{m} \subset \pi_0(R)$ is an associated prime ideal of $\pi_n(R)$.

Since $R$ is regular, the coherent left $R$-module $\kappa$ is perfect.
Therefore, the following claim will yield the desired contradiction:
\begin{enumerate}
  \item[$(\ast)$]
Let $N$ be a nonzero connective perfect left $R$-module.
Let $k\ge 0$ be the smallest natural number such that $N$ is of tor-amplitude $\le k$.
Then we have $\pi_{n+k}(N) \ne 0$.
\end{enumerate}

To prove $(\ast)$ we argue by induction.
In the case $k=0$, $N$ is flat with $\pi_0(N) \ne 0$, so that $\pi_{n}(N) \simeq \pi_{n}(R) \otimes_{\pi_0(R)} \pi_0(N) \ne 0$.
Assume therefore that $k>0$.
Choose elements of $\pi_0(N)$ that induce a basis of the $\kappa$-vector space $\pi_0(N) \otimes_{\pi_0(R)} \kappa$ and consider the surjective $R$-module morphism $u : R^{\oplus m} \to N$ they determine.
Its fiber, which we denote $N'$, is a connective perfect left $R$-module fitting in an exact triangle
  \begin{equation*}
     N' \xrightarrow{v} R^{\oplus m} \xrightarrow{u} N.
   \end{equation*}
Considering the induced exact sequence of abelian groups
  \begin{equation*}
    0 = \pi_{n+k}(R^{\oplus m}) \to \pi_{n+k}(N) \to \pi_{n+k-1}(N') \xrightarrow{\varphi} \pi_{n+k-1}(R^{\oplus m}),
  \end{equation*}
it will suffice to show that $\varphi$ is not injective.

Since $N$ is of tor-amplitude $\le k$, with $k$ minimal and $> 0$, it follows that $N'$ is nonzero and of tor-amplitude $\le k-1$.
Therefore by the inductive hypothesis, $\pi_{n+k-1}(N') \ne 0$.
If $k>1$, then $\pi_{n+k-1}(R^{\oplus m}) = 0$, so $\varphi$ is not injective.
It remains to consider the case $k=1$.
In this case, the perfect left $R$-module $N'$ is flat, and hence free of finite rank $r\ge 0$, since $\pi_0(R)$ is local.
Therefore the map $v : N' \to R^{\oplus m}$ is determined up to homotopy by a matrix $\{v_{i,j}\}_{1\le i\le r,1\le j\le m}$ of elements $v_{i,j} \in \pi_0(R)$.
Since $\mfr{m} \subset \pi_0(R)$ is an associated prime of $\pi_n(R)$, we may choose a nonzero element $x \in \pi_n(R)$ such that multiplication by $x$ kills $\mfr{m}$.
We claim that the element $(x,x,\ldots,x) \in \pi_n(R)^{\oplus r} \simeq \pi_n(N')$ is sent by $\varphi$ to zero in $\pi_n(R^{\oplus m})$.
For this purpose it will suffice to show that the elements $v_{i,j} \in \pi_0(R)$ all belong to $\mfr{m}$.
Consider the exact sequence
  \begin{equation*}
    \pi_0(N')/\mfr{m} \xrightarrow{v} \pi_0(R^{\oplus m})/\mfr{m} \xrightarrow{u} \pi_0(N)/\mfr{m} \to 0.
  \end{equation*}
Since the map $u : \pi_0(R^{\oplus m})/\mfr{m} \to \pi_0(N)/\mfr{m}$ is injective, it follows that $v : \pi_0(N')/\mfr{m} \to \pi_0(R^{\oplus m})/\mfr{m}$ is the zero map.
In other words, the image of $v : \pi_0(N') \to \pi_0(R^{\oplus m})$ lands in  $\mfr{m}^{\oplus m} \subset \pi_0(R)^{\oplus m}$, as desired.
\end{proof}

\ssec{A noncommutative counterexample}\label{ncmcase}

We now give an example of a \emph{noncommutative} regular ring spectrum that is bounded but not discrete.
This shows that the quasi-commutativity hypothesis in \thmref{regdiscrete} is necessary.

\begin{constr}\label{constr:noncommutative counterexample}
Let $k$ be a field.
Consider the graded ring $R$, concentrated in degrees $0$ and $1$, where
$R_0 = k \times k$ and $R_1 = k$
with multiplication defined by the formulas
$$(a,b)\cdot (c,d)= (ac,bd)$$
$$x\cdot (a,b)= bx$$
$$(a,b)\cdot x = ax,$$
where $(a,b),(c,d) \in R_0$, $x \in R_1$.
\end{constr}

The graded ring $R$ is associative but not commutative, and gives rise to a connective $\sE_1$-ring spectrum, which we denote again by $R$, that is not quasi-commutative.
However, it has regular $\pi_0$, and is also itself regular:

\begin{prop}
Let $R$ be the $\sE_1$-ring spectrum defined by \constrref{constr:noncommutative counterexample}.
Then $R$ is regular.
\end{prop}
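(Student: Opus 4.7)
The plan is to apply \propref{regcriterion}. Since $\pi_0(R) = k \times k$ is regular as a commutative ring and $R$ is manifestly left noetherian (only $\pi_0(R)$ and $\pi_1(R) = k$ are nonzero, and both are finitely generated over $\pi_0(R)$), it suffices to exhibit $\pi_0(R)$ as a perfect left $R$-module. Rather than invoke \lemref{lem:pi_0(R)/m criterion for regularity}, which would require the additional step of reducing to a local $\pi_0$, my plan is to produce an explicit finite cell presentation of $\pi_0(R)$.

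The key step is to exploit the orthogonal idempotents $e_1 = (1,0)$ and $e_2 = (0,1)$ in $\pi_0(R)$, which split $R \simeq Re_1 \oplus Re_2$ and $\pi_0(R) \simeq \pi_0(R) e_1 \oplus \pi_0(R) e_2$ as left $R$-modules. The multiplication formulas give $x \cdot e_1 = 0 \cdot x = 0$ but $x \cdot e_2 = 1 \cdot x = x$, so $Re_1$ is concentrated in degree $0$ and can be identified with the discrete module $\pi_0(R) e_1$, while $Re_2$ has $\pi_0 \simeq \pi_0(R) e_2$ and $\pi_1 \simeq k$. Tracking the left $R$-action on $\pi_1(Re_2)$ shows that $\pi_0(R)$ acts via the first projection and $x$ acts by zero (forced, since $\pi_2(Re_2) = 0$); hence $\pi_1(Re_2) \simeq \pi_0(R) e_1$ as left $R$-modules.

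The first summand $\pi_0(R) e_1 \simeq Re_1$ is then a direct summand of $R$ and therefore perfect. For the second summand, the truncation fiber sequence of $Re_2$ reads
\[
\Sigma(\pi_0(R) e_1) \longrightarrow Re_2 \longrightarrow \pi_0(R) e_2,
\]
exhibiting $\pi_0(R) e_2$ as the cofiber of a map between perfect modules and hence itself perfect. Combining, $\pi_0(R)$ is perfect, and \propref{regcriterion} gives the regularity of $R$.

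The argument is mostly bookkeeping of module structures, and the only conceptual subtlety—the point where quasi-commutativity would fail the argument that must fail by \thmref{regdiscrete}—is the left-right asymmetry $x e_1 = 0 \ne e_1 x$, which is precisely what allows the discrete module $\pi_0(R) e_1$ to appear as a summand of $R$ without producing unbounded homotopy.
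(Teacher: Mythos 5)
Your proof is correct and is essentially the paper's argument viewed from a slightly different angle. Both rely on the same observation: the degree-$1$ part of $R$, as a left $R$-module, is a copy of the discrete perfect summand $Re_1$, so $\pi_0(R)$ sits in a cofiber sequence of perfect modules. The paper packages this as a single step—construct the map $X[1] \to R$ (with $X$ the discrete idempotent summand) and identify it with the connective cover $\tau_{\ge 1}R$, whence $\pi_0(R) \simeq \Cofib(X[1]\to R)$—whereas you split $\pi_0(R) \simeq \pi_0(R)e_1 \oplus \pi_0(R)e_2$ and handle the two pieces separately, using the Postnikov fibre sequence of $Re_2$ for the second. The two organizations are interchangeable; yours is marginally more explicit bookkeeping, the paper's marginally more compact, and your closing remark about the left–right asymmetry $x e_1 = 0 \ne e_1 x$ correctly pinpoints where quasi-commutativity is violated, which is exactly the obstruction that \thmref{regdiscrete} says must be present.
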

\begin{proof}
By \propref{regcriterion} it suffices to show that $\pi_0(R)$ is perfect.

Denote by $X$ the direct summand of the free $R$-module $R$ corresponding to the projector $(0,1) \in \pi_0(R) = k \times k$.
We first note that $X \simeq k$ is discrete.
Indeed we have by definition $\pi_0(X) = k$ and $\pi_i(X) = 0$ for $i\neq 0,1$.
The only $x \in k$ satisfying $(0,1)\cdot x = x$ is zero, so $\pi_1(X)$ is also zero.

Since $x\cdot (0,1) = x \in \pi_1(R)$, the element $1 \in k \simeq \pi_1(R)$ gives rise to a map $X[1] \xrightarrow{1} R$.
This map induces an isomorphism $\pi_0(X) \to \pi_1(R)$, and therefore exhibits $X[1]$ as the $1$-connective cover $\tau_{\ge 1} R$.
It follows that its cofiber is an object of $\Mod_R^\perf$ equivalent to $\tau_{\le 0} R = \pi_0(R)$.
\end{proof}

\begin{rem}
In fact it is easy to see using the Künneth spectral sequence that any $\sE_1$-ring spectrum $R$ is regular as soon as the classical ring $\pi_*(R) = \bigoplus_n \pi_n(R)$ is regular.
In the example above, the ring $\pi_*(R)$ is isomorphic to the ring of triangular matrices, which is regular.
\end{rem}

\ssec{Regular spectral stacks}
In this section we study regularity for spectral stacks.
In particular we show an analogue of \thmref{regdiscrete} for quotient stacks.

Recall that a functor $F \colon \Einfty-\operatorname{Rings} \to \Spt$ is called a {\it spectral stack} if the following holds:
\begin{itemize}
  \item $F$ is a sheaf in the fpqc topology on $\Einfty-\operatorname{Rings}^{op}$.
  \item there exists a surjection of sheaves $\Spec (R) \stackrel{f}\to F$, where $\Spec$ denotes
  the Yoneda embedding $\Einfty-\operatorname{Rings}^{op} \to \Fun(\Einfty-\operatorname{Rings}, \Spt)$.
  We call this map an {\it atlas} of $F$.
  \item the map $\Spec(R) \to F$ of the map $f$ is flat, that is for any map $\Spec(S) \to F$ the pullback map
  $$\Spec(S) \times_F \Spec(R) \to \Spec(R)$$
  is equivalent to $\Spec(f_R)$ for some flat map of $\sE_\infty$-rings $R \stackrel{f_R}\to R'$.
\end{itemize}
We say that a spectral stack is connective if the ring spectrum $R$ can be chosen to be connective.

Note that any $\Einfty$-ring can be viewed as a spectral stack via taking the representable functor.
Denote by $\Stk_{\infty}$ the $\infty$-category of spectral stacks.
We define $\Qcoh \colon \Stk_{\infty} \to \on{Cat}_{\infty}^{ex}$ to be the right Kan extension of the
functor $\Mod_R \colon \Einfty-\operatorname{Rings} \to \on{Cat}_{\infty}^{ex}$. This functor  admits a
symmetric monoidal enhancement (see \cite[6.2.6]{SAG-20180204}). Moreover $\Qcoh(X)$ admits a t-structure
where the nonnegative part is the full subcategory of connective objects, i.e., those objects that become
connective after pullback to a flat atlas (see \cite[Corollary~9.1.3.2]{SAG-20180204}).

\sssec{}
An affine group scheme $G$ is said to be {\it linearly reductive} if the functor of taking invariants
$$(-)^G\colon \Qcoh(BG) \to \Qcoh_{\Spec k}$$
is t-exact. We say that a group scheme $G$ acts on
$\Einfty$-$k$-algebra $R$ if $R$  an $\Einfty$-algebra object in $\Qcoh(BG)$.
Given an affine group scheme $G$ over a field $k$ acting on a connective $\Einfty$-$k$-algebra $R$ consider the functor
$$[\Spec R/G] \colon \Einfty-\operatorname{Rings} \to \Spt$$
which is the colimit of the simplicial
diagram
$$
\begin{tikzcd}
\cdots G \times_k G \times_k \Spec R
 \arrow[r,shift left, shift left]\arrow[r]\arrow[r, shift right, shift right]& G \times_k \Spec R \arrow[l,shift left]\arrow[l,shift right]\arrow[r, shift left]\arrow[r,shift right] &\Spec R\arrow[l]
\end{tikzcd}
$$
in $\Fun(\Einfty-\operatorname{Rings}, \Spt)$.
This turns out to be a stack and an atlas is given by $\Spec R \to [\Spec R/G]$ (see \cite[Corollary 9.1.1.5]{SAG-20180204}). We call it the {\it quotient stack}. When $R=k$ and the action is trivial we adopt the notation $BG := [\Spec k/G]$.
We have
$$\Qcoh([\Spec R/G]) \simeq \lim \Qcoh(G^{\times n} \times_k \Spec R) \simeq \lim \Mod_{\mathcal{O}_G^{\otimes n} \otimes_k R}(\Qcoh(G^{\times n})) \simeq \Mod_R(\Qcoh(BG)),$$
where $R$ is considered as an $\sE_{\infty}$-algebra object in $\Qcoh(BG)$ on the right-hand side.
(cf. \cite[Proposition~2.7]{Iwanari}). Alternatively, we may identify the \inftyCat $\Qcoh([\Spec R/G])$ with $R$-modules endowed with an action of $G$ which is compatible with an action on $R$.

We refer to \cite[Sect.~9]{SAG-20180204}
for more generalities on spectral stacks.

\begin{defn}\label{regularity:stacks}\leavevmode

\noindent{(i)}
We say that $X \in \Stk_\infty$ is \emph{homologically regular} if the t-structure on $\Qcoh(X)$ restricts to the subcategory of compact objects.

\noindent{(ii)}
We say that $X\in \Stk_\infty$ is \emph{regular} if there exists a smooth, faithfully flat morphism $\Spec(R) \to X$ with $R$ a regular connective $\Einfty$-ring spectrum.
\end{defn}

The two definitions are not in general equivalent because homological regularity is not local (even in the étale topology).
Indeed, for a field $k$ of positive characteristic $p$ and a finite discrete group $G$ of order $p$, the quotient stack $[\Spec(k)/G]$ is not homologically regular.

However, for $X=\Spec (R)$, where $R$ is some connective $\Einfty$-ring spectrum, the two notions are equivalent.
Indeed, (i) implies (ii) tautologically, while the converse implication follows from \cite[Proposition~2.8.4.2]{SAG-20180204}.

\sssec{}

For the rest of the section, we fix a field $k$, $G$
an affine group scheme, and $R$ a noetherian connective $\sE_{\infty}$-$k$-algebra with an action of $G$ such that $\pi_0(R)$ is a finite type $k$-algebra. We additionally
assume that either $char(k)=0$ or $G$ is linearly reductive.
We write $X = [\Spec(R)/G]$ and $f \colon X \to BG$, $p \colon \Spec R \to X$ for the canonical morphisms.
Under the assumptions the functor $f^* \colon \Qcoh(BG) \to \Qcoh(X)\simeq \Mod_R(\Qcoh(BG))$ can be identified with $V \mapsto V \otimes R$.
The inverse image functor
  \begin{equation*}
    p^* \colon \Qcoh(X) \to \Mod_R,
  \end{equation*}
is identified with the functor of forgetting the action.

The main theorem of this section is:

\begin{thm}\label{regularityQuotientStack}
The spectral stack $[\Spec(R)/G]$ is homologically regular if $R$ is regular.
Moreover, the converse implication holds if $G$ is smooth and $\pi_*(R)$ is graded noetherian.
\end{thm}

In particular, we see that for the spectral stack $[\Spec(R)/G]$ with smooth $G$ and noetherian $\pi_*(R)$, homological regularity coincides with regularity.
Note that the graded ring $\pi_*(R)$ is automatically noetherian if $R$ is a bounded and noetherian
$\sE_{\infty}$-ring spectrum.
Combining this with \thmref{regdiscrete}, we get:

\begin{cor}\label{cor:discreteness stacks}
Assume that $G$ is smooth.
If $R$ is bounded and the spectral stack $[\Spec R/G]$ is homologically regular, then $R$ is discrete.
\end{cor}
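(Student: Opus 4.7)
The plan is to deduce the corollary as an immediate consequence of the two main theorems already established in the paper, namely \thmref{regularityQuotientStack} and \thmref{regdiscrete}. Since the hypotheses on $k$, $R$, and $G$ are exactly those of \thmref{regularityQuotientStack}, the assumption that $[\Spec(R)/G]$ is homologically regular translates directly, via that theorem, into the statement that the $\Einfty$-ring spectrum $R$ is regular in the sense of \defref{regularity:stacks} (equivalently, in the sense used throughout \secref{sec:discretenessandregularity}).

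Having reduced to regularity of $R$, I would then invoke \thmref{regdiscrete}. To do so I need to verify its hypotheses: $R$ is a connective $\sE_1$-ring (it is even $\Einfty$), it is left noetherian by assumption, it is regular by the previous step, and it is bounded (has only finitely many nontrivial homotopy groups). The only point requiring a brief remark is quasi-commutativity: since $R$ is an $\Einfty$-ring, its graded ring of homotopy groups $\pi_*(R)$ is graded-commutative, and in particular elements of $\pi_0(R)$ commute strictly with elements of each $\pi_n(R)$, so $R$ is quasi-commutative in the sense of the paper. Then \thmref{regdiscrete} directly yields that $R$ is discrete.

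In summary, the deduction is a two-line chain: \thmref{regularityQuotientStack} converts the geometric hypothesis of homological regularity of $[\Spec(R)/G]$ into the algebraic hypothesis of regularity of $R$; \thmref{regdiscrete} then converts regularity together with boundedness into discreteness. There is no real obstacle to overcome here beyond checking the quasi-commutativity hypothesis, which follows formally from the $\Einfty$-structure on $R$.
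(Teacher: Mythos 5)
Your proof is correct and follows exactly the route the paper takes: the text immediately preceding the corollary says ``Combining this with \thmref{regdiscrete}, we get\dots'', so the intended argument is precisely to apply \thmref{regularityQuotientStack} to convert homological regularity of the stack into regularity of $R$ and then feed that into \thmref{regdiscrete}. Your extra sentence checking quasi-commutativity (an $\Einfty$-structure forces $\pi_*(R)$ to be graded-commutative, hence $\pi_0(R)$ acts centrally on each $\pi_n(R)$) is a worthwhile explicit verification that the paper leaves implicit.
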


\begin{prop}\label{prop:compact generation of Mod_X}
The stable \inftyCat $\Qcoh(X)$ is compactly generated by the inverse images $f^*(V) \in \Qcoh(X)$, for all irreducible $k$-representations $V \in \Qcoh(BG)$.
\end{prop}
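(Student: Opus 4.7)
The plan is to verify, for every irreducible $k$-representation $V$ of $G$, that the object $f^*(V) \in \Qcoh(X)$ is compact, and that the family $\{f^*(V)\}_V$ is jointly conservative; together these two facts give compact generation. The essential input is that $\Qcoh(BG)$ is itself compactly generated by its irreducible $k$-representations. Under the linearly reductive hypothesis the abelian category $\on{Rep}(G)$ is semisimple with only finite-dimensional irreducibles, and consequently the $\infty$-category $\Qcoh(BG)$ is compactly generated by the set of irreducibles.

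Via the identification $\Qcoh(X) \simeq \Mod_R(\Qcoh(BG))$ recalled above, the functor $f^*$ is $V \mapsto V \otimes_k R$, and its right adjoint $f_*$ is the forgetful functor $\Mod_R(\Qcoh(BG)) \to \Qcoh(BG)$. This forgetful functor commutes with all colimits and is conservative: an $R$-module whose underlying object in $\Qcoh(BG)$ vanishes must itself be zero. From the adjunction
\begin{equation*}
\Hom_{\Qcoh(X)}(f^*(V), M) \simeq \Hom_{\Qcoh(BG)}(V, f_*(M)),
\end{equation*}
the compactness of $V$ in $\Qcoh(BG)$ together with the preservation of filtered colimits by $f_*$ yields compactness of $f^*(V)$ in $\Qcoh(X)$. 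If the left-hand side vanishes for every irreducible $V$, then so does the right-hand side, whence $f_*(M) = 0$ by compact generation of $\Qcoh(BG)$, and therefore $M = 0$ by conservativity of $f_*$.

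The only non-formal step I anticipate is the preliminary input that $\Qcoh(BG)$ is compactly generated by its irreducible representations, which is precisely where the linearly reductive hypothesis enters in an essential way; the rest of the argument is routine bookkeeping with the $f^* \dashv f_*$ adjunction.
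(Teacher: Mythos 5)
Your proposal is correct and follows essentially the same route as the paper: reduce to compact generation of $\Qcoh(BG)$ by irreducibles (which the paper attributes to Hall--Rydh, Theorem~A(b)), identify $\Qcoh(X)$ with $\Mod_R(\Qcoh(BG))$, and transport compact generation along the free/forgetful adjunction $f^*\dashv f_*$ using that $f_*$ preserves colimits and is conservative. The paper simply packages this last step as a standalone lemma (\lemref{modulesGeneration}).
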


\begin{proof}
By \cite[Thm.~A(c)]{hallandrydh}, the \inftyCat $\Qcoh(BG)$ is compactly generated by the irreducible $k$-representations of $G$.
Thus the claim follows from \lemref{modulesGeneration} below.
\end{proof}


\begin{lem}\label{modulesGeneration}
Let $\sC^{\otimes}$ be a stable symmetric monoidal $\infty$-category whose underlying $\infty$-category is compactly generated by $\{X_i\}$ and the
unit is compact.
Then for any $\sE_1$-algebra object $A$ in $\sC^{\otimes}$ the \inftyCat of left $A$-modules $\Mod_A(\sC)$ is compactly generated by $\{A\otimes X_i\}$.
\end{lem}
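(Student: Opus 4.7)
The plan is to exploit the standard free–forgetful adjunction
\[
F : \sC \rightleftarrows \Mod_A(\sC) : U,
\]
where $F(X) = A \otimes X$ and $U$ forgets the $A$-action. By \cite[Cor.~4.2.3.5]{HA-20170918} (applied to the left action of $A$ on itself), $U$ preserves and reflects all small colimits; in particular it is conservative and commutes with filtered colimits. Once this adjunction is in hand, compact generation of $\Mod_A(\sC)$ by $\{A \otimes X_i\}$ reduces cleanly to compact generation of $\sC$ by $\{X_i\}$.

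First, I will check that each $A \otimes X_i$ is compact in $\Mod_A(\sC)$. For any filtered system $\{M_j\}$ in $\Mod_A(\sC)$, the adjunction together with compactness of $X_i$ in $\sC$ and the fact that $U$ preserves filtered colimits gives
\[
\Maps_{\Mod_A(\sC)}\!\bigl(A \otimes X_i,\, \colim_j M_j\bigr)
\simeq \Maps_\sC\!\bigl(X_i,\, \colim_j U(M_j)\bigr)
\simeq \colim_j \Maps_{\Mod_A(\sC)}(A \otimes X_i, M_j).
\]
Here the compactness of the unit $\un \in \sC$ is what lies behind the fact that $F$ is a left adjoint sending the distinguished compact generators of $\sC$ to compact objects; more generally one may note that, since $U$ preserves all colimits (not merely filtered ones), its left adjoint $F$ sends compact objects to compact objects.

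Next, I will verify that $\{A \otimes X_i\}$ jointly detects zero objects, which together with compactness and the presentability of $\Mod_A(\sC)$ yields compact generation. Suppose $M \in \Mod_A(\sC)$ satisfies $\Maps_{\Mod_A(\sC)}(A \otimes X_i[n], M) \simeq \ast$ for all $i$ and all $n \in \bZ$. By adjunction this reads
\[
\Maps_\sC(X_i[n],\, U(M)) \simeq \ast \quad \text{for all } i, n,
\]
and since $\{X_i\}$ compactly generates the stable \inftyCat $\sC$, we conclude $U(M) \simeq 0$. Conservativity of $U$ then forces $M \simeq 0$.

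The main potential obstacle is really just locating and applying the correct statement about module categories — namely that, when the monoidal product on the presentable stable $\sC$ preserves colimits in each variable, the forgetful functor $U : \Mod_A(\sC) \to \sC$ preserves all small colimits and is conservative. Given this input from \cite{HA-20170918}, the argument above is essentially formal.
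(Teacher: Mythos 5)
Your proof is essentially identical to the paper's: both rely on the free--forgetful adjunction $F \dashv U$ for $\Mod_A(\sC)$, invoke the standard facts from \cite{HA-20170918} that $U$ preserves colimits and is conservative, deduce compactness of $A \otimes X_i$ from that of $X_i$, and detect zero objects via the adjunction. (The paper cites Corollaries~4.2.3.7(2) and 4.2.3.2 rather than 4.2.3.5, and your parenthetical remark tying compactness of the unit to $F$ preserving compacts is a slight misattribution --- that preservation follows directly from $U$ preserving all colimits --- but this does not affect the argument.)
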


\begin{proof}
Let $U : \Mod_A(\sC) \to \sC$ be the forgetful functor, right adjoint to $Y \mapsto A \otimes Y$.
By Corollaries~4.2.3.7(2) and 4.2.3.2 of \cite{HA-20170918}, $U$ preserves colimits and is conservative.
This implies that the objects $A \otimes X_i$ are compact.
It remains to show that they form a set of generators.
If $Y \in \sC$ is an object such that the mapping space $\Maps_A(A\otimes X_i, Y[j]) = \Maps_\sC(X_i,U(Y)[j])$ is contractible for all $i$ and all $j\in\bZ$, then $U(Y)=0$.
This implies that $Y=0$, whence the claim.
\end{proof}

\begin{proof}[Proof of \thmref{regularityQuotientStack}]
First, assume that $R$ is regular.
To show $X$ is homologically regular it suffices to show that for each of the compact generators $f^*(V)$ from \propref{prop:compact
generation of Mod_X}, $\pi_n(f^*(V))$ is compact for all $n$. By t-exactness of $p^*$ (see \cite[Remark~9.1.3.4]{SAG-20180204}), we have
$p^*\pi_n(f^*(V)) = \pi_n(p^*f^*(V))$.
Since $p^*f^*(V) = R\otimes_k V$ is a free $R$-module, it is a compact object of $\Mod_R$.
If $R$ is regular, then $\pi_n(p^*f^*(V))$ is compact, so it follows from
\cite[Prop.~9.1.5.3]{SAG-20180204} and compactness of $\sO_X = f^*(k)$ that $\pi_n(f^*(V))$ is also compact.
Thus $X$ is homologically regular.

Now we prove the 'moreover' part. Assume $R$ is not regular.
We will argue by contradiction that $X$ cannot be homologically regular.
By Proposition~\ref{localization} we know that $R_{\mfr{m}}$ is not regular for some maximal ideal $\mfr{m} \subset \pi_0(R)$.
By \lemref{lem:pi_0(R)/m criterion for regularity} there exists a discrete coherent $R$-module $M$ such that $M_{\mfr{m}} \otimes_{R_{\mfr{m}}} \pi_0(R_{\mfr{m}})/\mfr{m}$ has infinitely many nonzero homotopy groups.
It follows that $M \otimes_R \pi_0(R)/\mfr{m}$ is also unbounded.
By \cite[Prop.~7.4]{Milne} and the smoothness assumption the $G$-orbit of the point of $\Spec (\pi_0(R))$ corresponding to the ideal $\mfr{m}$ is regular.
Therefore, if $I \subset \pi_0(R)$ denotes the ideal corresponding to the closure of the orbit, then there is an element $f \not\in \mfr{m}$ such that $(\pi_0(R)/I)_f$ is regular.
Now $\pi_0(R)/\mfr{m}$ has a structure of a $(\pi_0(R)/I)_f$-module and we have an equivalence
$$M \otimes_R \pi_0(R)/\mfr{m} \cong \big( M\otimes_R(\pi_0(R)/I)_f \big)  \otimes_{(\pi_0(R)/I)_f} \pi_0(R)/\mfr{m}.$$
In particular, the right-hand side is unbounded.
But since $(\pi_0(R)/I)_f$ is regular, this implies that the $(\pi_0(R)/I)_f$-module
  $$ M\otimes_R(\pi_0(R)/I)_f \simeq (M\otimes_R \pi_0(R)/I)_f $$
is unbounded and hence so is $M\otimes_R \pi_0(R)/I$.
In particular, $\pi_0(R)/I$ cannot be perfect as an $R$-module.

Since the ideal $I$ is $G$-invariant, $\pi_0(R)/I$ comes as $p^*(M)$ for some  $M \in \Qcoh(X)$.
It follows from Proposition \ref{prop:compact generation of Mod_X} that $f^* V$ is a compact object of $\Qcoh(X)$ for any finite dimensional $k$-representation $V$ of $G$.
If $X$ is homologically regular, $\pi_0(f^*(V))$ is then also a compact object in $\Qcoh(X)$.
The same holds for $M$, since it can be written as $\pi_0$ of the homotopy cofibre of a $G$-equivariant $R$-linear map $\varphi : V\otimes_k \pi_0(R) \to \pi_0(R)$ ($V$ is any finite-dimensional subrepresentation of $\pi_0(R)$ containing a finite set of generators of $I$).
But then the $R$-module $\pi_0(R)/I$ is compact (= perfect), since the functor $p^*$ preserves compact objects, whence the desired contradiction.
\end{proof}

\begin{rem}
\thmref{regularityQuotientStack} gives a lot of examples of homologically regular spectral stacks.
In fact, its proof shows that a spectral quotient stack $X=[\Spec R/G]$, defined over any 
ring $k$, is homologically regular, provided $R$ is a regular
$\Einfty$-ring and $G$ is a finite group whose order is invertible in $k$.

Thus we see that $[\Spec(\on{ku}[1/2])/C_2]$ is a homologically regular spectral stack for the canonical action of $C_2$.
Arguing as in \cite{BarwickLawson} one can show that there is a fiber sequence
$$\K^{C_2}(\bZ[1/2]) \to \K^{C_2}(\on{ku}[1/2]) \to \K^{C_2}(\on{KU}[1/2]).$$
\end{rem}



\section{Weight structures}
\label{sec:weights}

In this section we recall the notion of \emph{adjacent structures} and show its relation to regularity.
An adjacent structure consists of a pair of a t-structure and a weight structure compatible in a strict sense.
We begin by recalling what a weight structure is.

\ssec{Definitions and basic properties}

\begin{defn}
A {\it weight structure} $w$ on a stable $\infty$-category $\sC$ is the data of two full subcategories $\sC_{w\ge 0}, \sC_{w\le 0}$ closed under retracts satisfying the following axioms:
\begin{defnlist}
  \item {\em Semi-invariance with respect to translations.}
We have the inclusions
  $$\Sigma\sC_{w\ge 0} \subset \sC_{w\ge 0},
  \quad\Omega\sC_{w\le 0} \subset \sC_{w\le 0}.$$

  \item {\em Orthogonality.}
For any $X \in \sC_{w\le 0}$, $Y\in  \sC_{w\ge 0}$, we have
  $$\pi_0\Maps_\sC(X,\Sigma Y) = 0.$$

  \item {\em Weight decompositions}.
For any $X \in \sC$, there exists an exact triangle
  $$w_{\le 0} X \to X \to w_{\ge 1} X,$$
where $w_{\le 0} X \in \sC_{w\le 0}$ and $w_{\ge 1} X \in \Sigma\sC_{\ge 0}$.
\end{defnlist}

Morally one may think of the objects of $\sC_{w\ge 0}$ and $\sC_{w\le 0}$ as of those built out of non-negative and non-positive cells, respectively.
Then the orthogonality condition corresponds to the vanishing of negative homotopy groups and the weight decompositions are the skeletal filtrations.

\sssec{}
The \emph{heart} $\hw$ of a weight structure is the full subcategory whose objects belong to both $\sC_{w\ge 0}$ and $\sC_{w\le 0}$.
A weight structure is \emph{bounded} if any object $X$ is an object of $\Sigma^n\sC_{w\le 0}$ and of $\Omega^n\sC_{w\ge 0}$ for some $n$.
A functor between stable $\infty$-categories with weight structures is \emph{weight-exact} if it preserves both classes.

\sssec{}We also adopt the following notation. For any $X \in \sC$
and a weight decomposition of $\Omega^n X$
$$w_{\le 0} (\Omega^nX) \to \Omega^nX \to w_{\ge 1} (\Omega^nX),$$
the $n$-th suspension of the triangle will be denoted by
$w_{\le n} X \to X \to w_{\ge n+1} X$
and will be also called a weight decomposition of $X$.
\end{defn}

The following construction gives rise to a variety of examples of weight structures:

\begin{prop}\label{p1}
Let $\sA$ be an additive $\infty$-category.
Denote by $\widehat{\sA}$ the stable \inftyCat $\Funadd(\sA^\op, \Spt)$ of additive presheaves of spectra on $\sA$.
Then the full subcategory $\widehat{\sA}^c$ of compact objects admits a bounded weight structure $w$ with the following properties:
\begin{thmlist}
  \item
The full subcategory $(\widehat{\sA}^c)_{w\ge 0}$ is spanned by presheaves with values in connective spectra.

  \item
The Yoneda embedding $\sA \to \widehat{\sA}$ factors through a canonical functor $\sA \to \hw$ which exhibits the heart $\hw$ as the idempotent completion of $\sA$.
\end{thmlist}
\end{prop}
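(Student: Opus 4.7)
The plan is to apply a generation theorem for weight structures on stable $\infty$-categories (due to Bondarko in the triangulated setting, extended to $\infty$-categories by later authors): if a full additive subcategory $\sA$ of a stable \inftyCat is \emph{negative}, meaning $\pi_0\Maps(X,\Sigma^n Y) = 0$ for $X,Y \in \sA$ and $n > 0$, then the thick (i.e.\ idempotent-complete, stable) closure of $\sA$ carries a unique bounded weight structure with $\sA \subset \hw$, and the heart $\hw$ is the idempotent completion of $\sA$.

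First, I would verify that the Yoneda image of $\sA$ inside $\widehat{\sA}$ is negative: for $a,b \in \sA$ and $n > 0$,
\[ \pi_0\Maps_{\widehat{\sA}}(\Sigma^n y(a), y(b)) = \pi_{-n}\Hom_\sA(a,b) = 0, \]
since $\Hom_\sA(a,b)$ is a discrete abelian group. Since $\widehat{\sA} = \Funadd(\sA^{\op},\Spt)$ is compactly generated by the representables, the thick closure of their essential image in $\widehat{\sA}$ coincides with $\widehat{\sA}^c$. Invoking the generation theorem then produces the desired bounded weight structure $w$ and identifies its heart with the idempotent completion of $\sA$, yielding part \emph{(ii)}.

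For the characterization in part \emph{(i)}, I would prove the two inclusions separately. The inclusion $\subseteq$ is formal: by the recipe from the generation theorem, $(\widehat{\sA}^c)_{w \ge 0}$ is built from the generators $\Sigma^n y(a)$ ($n \ge 0$) under finite colimits and retracts; each such generator is objectwise connective, and the connective spectra are closed under finite colimits and retracts, so the closure remains connective-valued. For the inclusion $\supseteq$, I would use the orthogonality characterization of $(\widehat{\sA}^c)_{w\ge 0}$. If $X \in \widehat{\sA}^c$ is connective-valued, then the vanishing $\pi_0\Maps(Y,X) = 0$ holds for every generator $Y = \Sigma^{-n}y(a)$ with $n \ge 1$, because by Yoneda this is $\pi_{-n}X(a) = 0$; the vanishing then propagates to all $Y \in (\widehat{\sA}^c)_{w \le -1}$ by long exact sequences of mapping spectra and retract-stability. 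Applying this vanishing to a weight decomposition $w_{\le -1}X \to X \to w_{\ge 0}X$ forces the first map to be null-homotopic, so the cofiber sequence splits and exhibits $X$ as a retract of $w_{\ge 0}X \in (\widehat{\sA}^c)_{w\ge 0}$; by Karoubi-closedness of the latter, $X \in (\widehat{\sA}^c)_{w\ge 0}$.

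The main obstacle is the generation theorem itself: producing weight decompositions for every compact object requires an inductive argument along a cellular filtration in the thick closure, and its $\infty$-categorical refinement (ensuring functoriality and compatibility with retracts) takes some care, though it is by now well-established in the literature on weight structures in stable $\infty$-categories. Once this input is in hand, the remaining steps reduce to formal manipulations with the Yoneda embedding and the connective t-structure on $\Spt$.
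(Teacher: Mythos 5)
Your approach---apply Bondarko's negative-generation theorem for weight structures to the full subcategory of representable presheaves inside $\widehat{\sA}^c$, and then unwind the orthogonality/cellular characterization of $(\widehat{\sA}^c)_{w\ge 0}$---is in substance the same argument underlying the paper's cited reference \cite[Cor.~3.4]{Vovastheoremoftheheart}, which the paper invokes without further detail. The one slip worth flagging is the justification of negativity: in an additive $\infty$-category the mapping spectrum $\Maps_\sA(a,b)$ is \emph{connective}, not in general discrete. That is all you need, since negativity only requires $\pi_{-n}\Maps_\sA(a,b)=0$ for $n>0$, which follows from connectivity, so the rest of the argument is unaffected.
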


In fact, this is essentially the only example of a bounded weight structure, as the following proposition shows.

\begin{prop}\label{p2}
Let $\sC$ be an idempotent complete stable $\infty$-category endowed with a bounded weight structure.
Then there is a weight-exact equivalence of $\infty$-categories $\sC \to \widehat{\hw}^c$, where $\widehat{\hw}$ denotes the construction of \propref{p1}.
\end{prop}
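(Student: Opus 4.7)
The plan is to construct the equivalence via restricted Yoneda $\Psi : \sC \to \widehat{\hw}^c$, then verify both full faithfulness and essential surjectivity by reducing to tautologies on the heart, using the fact that a bounded weight structure makes $\sC$ the thick subcategory of itself generated by $\hw$.

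First I would define $\Psi$ by $\Psi(X) = \Maps_\sC(-, X)|_{\hw}$, an additive presheaf of spectra on $\hw$; this is exact in $X$. Its restriction to $\hw$ recovers, by inspection, the functor $\hw \to \hw(\widehat{\hw}^c)$ of \propref{p1}(ii), so it sends $\hw$ into $\widehat{\hw}^c \subset \widehat{\hw}$. Since $\widehat{\hw}^c \subset \widehat{\hw}$ is thick, and since boundedness of the weight structure together with idempotent-completeness of $\sC$ imply that $\sC$ equals the thick closure of $\hw$ inside itself, exactness of $\Psi$ forces $\Psi(\sC) \subset \widehat{\hw}^c$. Thus $\Psi$ refines to an exact functor $\Psi : \sC \to \widehat{\hw}^c$.

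Full faithfulness proceeds by d\'evissage. Let $\sS \subset \sC \times \sC$ be the full subcategory of pairs $(X, Y)$ for which the induced map $\Maps_\sC(X, Y) \to \Maps_{\widehat{\hw}^c}(\Psi X, \Psi Y)$ is an equivalence. Exactness of $\Psi$ together with the long exact sequences obtained from weight decompositions imply that $\sS$ is closed under fibre and cofibre sequences in each variable, and it is manifestly closed under retracts. By the Yoneda lemma for additive presheaves of spectra together with \propref{p1}(ii), all pairs in $\hw \times \hw$ lie in $\sS$. Applying the thick-closure fact for $\sC$ separately in each variable gives $\sS = \sC \times \sC$.

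Essential surjectivity is then formal: the essential image of a fully faithful exact functor between stable idempotent-complete \inftyCats is a thick subcategory, and here it contains $\Psi(\hw)$, which by \propref{p1}(ii) generates $\widehat{\hw}^c$ as a thick subcategory; hence $\Psi$ is essentially surjective. The main obstacle I expect is rigorously establishing the used fact that in a bounded weight structure $\sC$ equals the thick closure of $\hw$ within $\sC$: one iterates the weight decomposition axiom, using boundedness to force the procedure to terminate, obtaining a finite weight tower for any $X$ whose associated gradeds lie in shifts of $\hw$. Idempotent-completeness of $\sC$ (which passes to $\hw$ and matches the idempotent completion appearing in \propref{p1}(ii)) is essential here, both to ensure that retracts do not produce objects outside the image and to match the identification of the heart of $\widehat{\hw}^c$.
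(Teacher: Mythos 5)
Your approach is correct and amounts to a direct proof, whereas the paper simply invokes \cite[Cor.~3.4]{Vovastheoremoftheheart}, which contains essentially this argument (restricted Yoneda followed by d\'evissage). The construction of $\Psi$, the full-faithfulness d\'evissage over $\sS \subset \sC \times \sC$, and the essential-surjectivity argument are all sound, so you have in effect reproven the cited result rather than citing it.

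Two points deserve cleaning up. First, the ``main obstacle'' you flag --- that boundedness forces $\sC$ to be the thick closure of $\hw$ --- is indeed the crux, and it is worth noting that it does \emph{not} require idempotent completeness: every object of a bounded weight structure is a \emph{finite extension} (no retracts) of objects in shifted copies of $\hw$, by induction on the length $b-a$ of the weight interval $[a,b]$ of $X$, using the standard lemma (Bondarko) that a weight decomposition $w_{\le a}X \to X \to w_{\ge a+1}X$ of such an $X$ has $w_{\le a}X \in \Sigma^{a}\hw$ and $w_{\ge a+1}X$ with weights in $[a+1,b]$. Idempotent completeness of $\sC$ is instead used precisely where you use it: in essential surjectivity, to split the idempotents of $\widehat{\hw}^c$ that are pulled back along the fully faithful $\Psi$. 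Second, the phrase ``long exact sequences obtained from weight decompositions'' in the full-faithfulness step is misplaced: closure of $\sS$ under (co)fiber sequences in each variable follows from exactness of mapping spectra alone; weight decompositions only enter through the generation statement. You should also explicitly record that for $X,Y \in \hw$ the orthogonality axiom forces $\Maps_\sC(X,Y)$ to be a connective spectrum, which is what identifies $\Psi|_{\hw}$ with the additive Yoneda embedding of \propref{p1}(ii).
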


Both Propositions~\ref{p1} and \ref{p2} follow from \cite[Cor.~3.4]{Vovastheoremoftheheart}.
The following example will be of special interest for us:

\begin{exam}\label{exam:weight structure on Mod_R^perf}
Let $R$ be a connective $\sE_1$-ring spectrum and let $\sA$ be the additive \inftyCat $\Mod_R^\free$ of finitely generated free $R$-modules (concentrated in degree 0).
Then the construction $\widehat{\sA}^c$ of \propref{p1} is canonically equivalent to the stable \inftyCat $\Mod_R^\perf$.
It follows that $\Mod_R^\perf$ admits a canonical bounded weight structure $w$ such that the non-negative part $(\Mod_R^\perf)_{w\ge 0}$ is spanned by the connective perfect $R$-modules.
\end{exam}

\ssec{Weight structures compatible with the symmetric monoidal structure}

\begin{defn} \label{defn:weight monoidal compatible}
Let $\sC^{\otimes}$ be a stable symmetric monoidal  $\infty$-category such that the underlying $\infty$-category is endowed with a weight
structure $w$. We say that the weight structure is  {\it compatible} with the symmetric monoidal structure if
$\sC_{w\ge 0}$ and $\sC_{w\le 0}$ are closed under the tensor product operations.

In this situation the symmetric monoidal structure restricts to the subcategory $\hw$.
We denote the corresponding symmetric monoidal \inftyCat by $\hw^{\otimes}$.
\end{defn}

\sssec{}
We already know from Propositions \ref{p1} and \ref{p2} that stable $\infty$-categories together with a bounded weight structure correspond to
additive $\infty$-categories, at least up to idempotent completion.
It was shown in \cite[Lemma~4.2]{koaoki} that compatible weight structures correspond to symmetric monoidal additive $\infty$-categories in the same fashion:

\begin{prop}\label{symmonp2}
Let $\sC^{\otimes}$ be a stable idempotent complete symmetric monoidal $\infty$-category whose underlying $\infty$-category is endowed with
a compatible bounded weight structure.
Then there is a weight-exact equivalence of symmetric monoidal categories $\sC^{\otimes} \cong \widehat{\hw^{\otimes}}^c$, where
$\widehat{\hw^{\otimes}}$ is
a natural symmetric monoidal refinement of the construction $\widehat{\hw}$ from \propref{p1}.
\end{prop}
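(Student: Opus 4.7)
The plan is to upgrade the non-monoidal equivalence $\sC \simeq \widehat{\hw}^c$ of \propref{p2} to a symmetric monoidal equivalence by exhibiting both sides as solutions of the same universal problem in the world of stable symmetric monoidal $\infty$-categories.

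First I would construct $\widehat{\hw^{\otimes}}$ itself. The compatibility assumption of \defref{defn:weight monoidal compatible} ensures that $\hw \subset \sC$ is closed under $\otimes$, so the symmetric monoidal structure of $\sC^{\otimes}$ restricts to a symmetric monoidal additive $\infty$-category $\hw^{\otimes}$. I would then equip $\widehat{\hw} = \Funadd(\hw^{\op}, \Spt)$ with the Day convolution symmetric monoidal structure induced by $\hw^{\otimes}$, so that $\widehat{\hw}$ becomes a presentable stable symmetric monoidal $\infty$-category in which the Yoneda embedding $\hw \to \widehat{\hw}$ is symmetric monoidal. To descend to compact objects I would verify that the tensor product preserves compactness, which by bilinearity and continuity reduces to checking it on the compact generators given by representable presheaves; for these one has the Yoneda-type identification sending $x,y \in \hw$ to a representable on $x \otimes y$, which is again compact.

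The heart of the argument is then the following universal property: for any idempotent-complete stable symmetric monoidal $\infty$-category $\sD^{\otimes}$, restriction along the Yoneda embedding identifies the space of symmetric monoidal exact functors $\widehat{\hw^{\otimes}}^c \to \sD^{\otimes}$ with the space of symmetric monoidal additive functors $\hw^{\otimes} \to \sD^{\otimes}$. On the level of underlying stable $\infty$-categories this is essentially the content of \propref{p2}; the symmetric monoidal enhancement follows from the universal property of Day convolution for presentable symmetric monoidal $\infty$-categories (see Lurie, \emph{Higher Algebra}, Section 4.8.1), combined with the observation that the Ind-completion/compact-objects adjunction intertwines presentable and small symmetric monoidal universal properties since the Day convolution tensor product is compatible with filtered colimits.

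Applying this universal property with $\sD^{\otimes} := \sC^{\otimes}$, the inclusion $\hw^{\otimes} \hookrightarrow \sC^{\otimes}$, which is symmetric monoidal by \defref{defn:weight monoidal compatible}, extends uniquely to a symmetric monoidal exact functor $\Phi : \widehat{\hw^{\otimes}}^c \to \sC^{\otimes}$. The underlying exact functor of $\Phi$ must coincide with the equivalence of \propref{p2}, since the latter is characterized by the same (non-monoidal) universal property applied to the plain inclusion $\hw \hookrightarrow \sC$; hence $\Phi$ is a symmetric monoidal equivalence. The main technical obstacle is setting up the symmetric monoidal universal property of $\widehat{\hw^{\otimes}}^c$ cleanly, in particular tracking the compatibility between Day convolution, restriction to compact objects, and idempotent completion; since the authors attribute this result to \cite{koaoki}, much of this bookkeeping can be imported or adapted from there.
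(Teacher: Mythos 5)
The paper gives no proof of this proposition; it simply cites \cite[Lemma~4.2]{koaoki} for the statement. Your Day-convolution construction of $\widehat{\hw^{\otimes}}$, the verification that the tensor preserves compacts, and the appeal to the universal property of Day convolution (suitably transported along the $\mathrm{Ind}$/compact-objects adjunction) to upgrade the non-monoidal equivalence of \propref{p2} is exactly the right framework and is in line with what that reference does, so your proposal is correct.
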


\begin{exam}\label{exam:compatible weight structure on Mod_R^perf}
Let $R$ be a connective $\sE_{\infty}$-ring spectrum and let $\sA$ be the additive \inftyCat $\Mod_R^\free$ of finitely generated free $R$-modules. The weight structure from Example~\ref{exam:weight structure on Mod_R^perf} is compatible with the canonical symmetric monoidal structure.
\end{exam}

\ssec{Adjacent structures}

The definitions of weight structure and t-structure appear formally quite similar at first glance, but they behave completely differently. For instance,
there is no simple characterization of t-structures analogous to \propref{p2}.
An actual mathematical relation between the two notions is given by the following result of Bondarko \cite[Thm.~0.1]{BondarkoAdjacentStructures}.

\begin{thm}\label{adjtstr}
Let $\sC$ be a stable $\infty$-category that admits all small coproducts and satisfies the Brown representability theorem (i.e., every coproduct preserving homological functor $\sC^{op} \to Ab$ is representable). For instance, $\sC$ is compactly generated.
Suppose $(\sC_{w\ge 0}, \sC_{w\le 0})$ is a weight structure such that $\sC_{w\ge 0}$ is closed under arbitrary coproducts.
Then there exists a t-structure on $\sC$ such that $\sC_{t\ge 0} = \sC_{w\ge 0}$.
\end{thm}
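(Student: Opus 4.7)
The plan is to define $\sC_{t\ge 0} := \sC_{w\ge 0}$ and $\sC_{t\le -1} := \sC_{w\ge 0}^{\perp}$, the full subcategory of $Y \in \sC$ with $\pi_0\Maps_\sC(X, Y) = 0$ for all $X \in \sC_{w\ge 0}$, and then to verify the t-structure axioms. The semi-invariance axioms are immediate: $\sC_{t\ge 0}$ is closed under $\Sigma$ by weight axiom (i), and correspondingly $\sC_{t\le -1}$ is closed under $\Omega$ via the adjunction $\Maps(X, \Omega Y) \simeq \Omega \Maps(X, Y)$. The orthogonality axiom of the t-structure is built into the definition.

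The real content is producing, for each $Y \in \sC$, an exact triangle $Y_{\ge 0} \to Y \to Y_{<0}$ with $Y_{\ge 0} \in \sC_{w\ge 0}$ and $Y_{<0} \in \sC_{t\le -1}$ --- equivalently, constructing a right adjoint $\tau_{\ge 0}$ to the inclusion $i : \sC_{w\ge 0} \hookrightarrow \sC$. As a preparatory step, I would check that $\sC_{w\ge 0}$ is closed under extensions in $\sC$: given an exact triangle $X' \to X \to X''$ with $X', X'' \in \sC_{w\ge 0}$ and a weight decomposition $w_{\le -1} X \to X \to w_{\ge 0} X$, two applications of the orthogonality axiom force first the composite $w_{\le -1} X \to X \to X''$ and then the resulting factored map $w_{\le -1} X \to X'$ to be null; hence $w_{\le -1} X \to X$ is null, exhibiting $X$ as a retract of $w_{\ge 0} X \in \sC_{w\ge 0}$, so that $X \in \sC_{w\ge 0}$ by Karoubi closure. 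Combined with the coproduct hypothesis, this makes the functor $\Maps_\sC(-, Y)|_{\sC_{w\ge 0}}$ cohomological in the relevant sense: it sends coproducts to products and cofibers (which remain in $\sC_{w\ge 0}$) to fibers.

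The main step then invokes the Brown representability hypothesis for $\sC$ to produce an object $Y_{\ge 0} \in \sC_{w\ge 0}$ corepresenting $\Maps_\sC(-, Y)|_{\sC_{w\ge 0}}$, after which the fiber of the counit $Y_{\ge 0} \to Y$ automatically lies in $\sC_{t\le -1}$ by the universal property. The main obstacle is exactly this last step: $\sC_{w\ge 0}$ is not itself stable (closed under $\Sigma$ but not $\Omega$), so one cannot apply Brown representability internally. Instead one must either extend $\Maps_\sC(-, Y)|_{\sC_{w\ge 0}}$ to a cohomological functor on all of $\sC$ and invoke the ambient Brown representability, or build $Y_{\ge 0}$ by a transfinite cellular approximation from $\sC_{w\ge 0}$ using coproducts and pushouts to iteratively kill the failure of $\Maps(-, X_n) \to \Maps(-, Y)$ to be an equivalence on $\sC_{w\ge 0}$; the extension-closure established above then guarantees the representing object remains inside $\sC_{w\ge 0}$.
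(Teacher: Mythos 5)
The paper does not actually prove this statement: it is quoted from Bondarko (\cite[Thm.~0.1]{BondarkoAdjacentStructures}), so there is no in-paper proof to compare against. Evaluating your argument on its own terms: the setup is correct --- defining $\sC_{t\ge 0}=\sC_{w\ge 0}$ and $\sC_{t\le -1}=\sC_{w\ge 0}^{\perp}$, checking semi-invariance and orthogonality, and proving that $\sC_{w\ge 0}$ is closed under extensions via the null-homotopy of $w_{\le -1}X\to X$ are all fine and are indeed preliminary steps in any proof of this result.

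However, the main step --- producing the truncation triangle --- is not actually carried out, and this is where the real content of the theorem lies. You correctly identify the obstruction ($\sC_{w\ge 0}$ is not stable, so Brown representability cannot be applied inside it), but neither of the two workarounds you gesture at is a proof. For the first (extend $\Maps_\sC(-,Y)|_{\sC_{w\ge 0}}$ to a cohomological functor on $\sC$), the naive extension $\Maps_\sC(-,Y)$ is represented by $Y$ itself and gains you nothing; the nontrivial idea you would need here is Bondarko's notion of a \emph{virtual t-truncation} of a (co)homological functor, built from weight decompositions via an image formula, which yields a well-defined cohomological functor on all of $\sC$ that agrees with $\Maps(-,Y)$ on $\sC_{w\ge 0}$ and, after Brown representability, has representing object landing in $\sC_{w\ge 0}$. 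This is the crux of Bondarko's argument and is absent from your sketch. For the second (transfinite cellular approximation), you need a set of ``cells'' in $\sC_{w\ge 0}$ that detects maps into arbitrary objects, i.e.\ an essentially small subcategory generating $\sC_{w\ge 0}$ under coproducts, extensions, suspensions, and retracts. Nothing in the hypotheses supplies such a set: even when $\sC$ is compactly generated, its compact generators need not lie in $\sC_{w\ge 0}$, and the heart $\hw$ need not be essentially small. Without addressing either of these points, the proof is incomplete at its central step.
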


When $\sC$ does not have all coproducts, such a t-structure may not exist.
When it does, it is called an adjacent t-structure:

\begin{defn}
Let $\sC$ be a stable $\infty$-category.
Given a weight structure $(\sC_{w\ge 0}, \sC_{w\le 0})$ and a t-structure $(\sC_{t\ge 0}, \sC_{t\le 0})$, we say that they are {\it adjacent} if the equality $\sC_{t\ge 0} = \sC_{w\ge 0}$ holds.
\end{defn}

We write $\htt$ for the heart of the t-structure.

\begin{rem}
Let $\sC$ be a stable \inftyCat with a weight structure.
If an adjacent t-structure exists, then it is completely determined as follows: $\sC_{t\ge 0} = \sC_{w\ge 0}$, and $\sC_{t\le 0}$ is the full subcategory of objects $X \in \sC$ such that $\pi_0\Maps_\sC(\Sigma Y, X) = 0$ for all $Y \in \sC_{w\ge 0}$.
In other words, the existence of an adjacent t-structure is a property (as opposed to additional structure).
\end{rem}

\begin{exam}\label{exam:perfect adjacent}
Let $R$ be a regular $\sE_1$-ring spectrum and consider the \inftyCat $\Mod_R^{\perf}$.
Its canonical t- and weight structures (\remref{rem:regularity and t-structure} and \examref{exam:weight structure on Mod_R^perf}) are adjacent.
Conversely, any non-regular $R$ provides an example of a weight structure with no adjacent t-structure.
\end{exam}

\sssec{}

Let $\sC$ be a monoidal stable \inftyCat with a bounded weight structure.
We think of $\sC$ as a ring spectrum with ``many objects''.
In view of \examref{exam:perfect adjacent}, the existence of an adjacent t-structure can be viewed as a regularity condition on $\sC$.
If $\sC$ is moreover symmetric monoidal, in a way compatible with the weight structure (in the sense of \defref{defn:weight monoidal compatible}), then we think of $\sC$ as a \emph{commutative} regular ring spectrum with many objects.


With this in mind we expect the following generalization of \thmref{regdiscrete}:

\begin{conj}\label{mainth}
Let $\sC^{\otimes}$ be a stable idempotent complete symmetric monoidal $\infty$-category with
a compatible bounded weight structure $w$ (in the sense of \defref{defn:weight monoidal compatible}).
Suppose there exists a natural number $N$ such that
$\pi_n\hw(X,Y) = 0$ for all $n \ge N$ and $X, Y \in \hw$.
If $w$ admits an adjacent t-structure, then $\hw$ is discrete.
\end{conj}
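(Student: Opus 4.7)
The plan is to reduce \conjref{mainth} to the spectral quotient stack setting via Tannakian reconstruction and then invoke \corref{cor:discreteness stacks}. This is exactly the strategy hinted at by \remref{rem:Iwanari}, so under the appropriate hypotheses on $\sC^\otimes$ (rigidity, a suitable fiber functor, compact generation by dualizable objects) the argument should be reasonably direct.

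First, I would apply Iwanari's Tannakian theorem to produce a symmetric monoidal equivalence $\sC^\otimes \simeq \Qcoh(X)^{c,\otimes}$, where $X = [\Spec(R)/G]$ is a spectral quotient stack over a field $k$, with $R$ a noetherian $\sE_\infty$-$k$-algebra and $G$ a linearly reductive algebraic group acting on $R$. Second, I would match the weight structure $w$ with the canonical one of \thmref{StacksWeightStructure} on compact quasi-coherent sheaves, whose heart is generated by pullbacks of $G$-representations; uniqueness of bounded weight structures with a prescribed heart in the sense of Propositions~\ref{p1} and \ref{p2} (together with its symmetric monoidal refinement \propref{symmonp2}) should reduce this step to identifying the hearts. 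Third, the existence of an adjacent t-structure on $\sC$ translates by construction into the canonical t-structure on $\Qcoh(X)$ restricting to $\sC$, i.e., homological regularity of $X$ in the sense of \defref{regularity:stacks}(i). \thmref{regularityQuotientStack} then gives that $R$ is regular.

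Fourth, I would argue that the uniform bound $\pi_n\hw(X,Y) = 0$ for $n \ge N$ and $X,Y \in \hw$ forces $R$ to be bounded. Using the identification $\Qcoh(X) \simeq \Mod_R(\Qcoh(BG))$ and the fact that the forgetful functor $p^* : \Qcoh(X) \to \Mod_R$ is symmetric monoidal, t-exact, and conservative (\cite[Remark~9.1.3.4]{SAG-20180204}), mapping spaces in $\hw$ compute $G$-invariants of $R$-module mapping spaces, so a uniform truncation on the whole heart can be transferred to a uniform truncation on $\pi_*(R)$. At this point \corref{cor:discreteness stacks} applies and yields that $R$ is discrete, from which discreteness of $\hw$ (a category of $G$-equivariant finitely generated projective modules over a discrete ring) follows immediately.

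The principal obstacle is the Tannakian input. Iwanari's reconstruction requires more than just a compatible bounded weight structure and an adjacent t-structure, so for general $\sC^\otimes$ one would need to either enlarge the hypotheses or replace this step by a genuinely intrinsic argument. A naive object-by-object reduction—pass to $R_X := \End_\sC(X)$ for $X \in \hw$, deduce regularity of $R_X$ from \examref{exam:perfect adjacent}, and apply \thmref{regdiscrete}—fails because $R_X$ need not be quasi-commutative, and indeed \constrref{constr:noncommutative counterexample} shows that quasi-commutativity cannot be dropped in \thmref{regdiscrete}. Exploiting the symmetric monoidal compatibility of $w$ to extract a global quasi-commutativity statement, or alternatively lifting the minimal-resolution argument of \thmref{regdiscrete} to a "many-object" setting, is the crux of the matter, and is what keeps \conjref{mainth} at the level of conjecture in full generality.
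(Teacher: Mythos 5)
What you are asked to prove is in fact stated as a conjecture in the paper, and the paper does not prove it in full generality: the only thing the paper records is the partial verification you reconstruct, namely Example~\ref{exam:regular stacks}, which assumes the extra hypothesis $(\ast)$ that $\sC$ is symmetric monoidally and weight-exactly equivalent to $\Qcoh([\Spec(R)/G])$ for a linearly reductive $G$ over a field and a noetherian $\Einfty$-$k$-algebra $R$ with $\pi_0(R)$ of finite type, with Remark~\ref{rem:Iwanari} invoking Iwanari's Tannakian theory to characterize such $\sC$ intrinsically in characteristic zero. Your proposal matches this partial argument step for step: reduction to the quotient stack case, identification of the weight and t-structures via Proposition~\ref{symmonp2} and Remark~\ref{rem:weight structure on Mod_X}, homological regularity via Theorem~\ref{regularityQuotientStack}, boundedness of $R$ from the uniform truncation bound on $\hw$-mapping spectra, and finally Corollary~\ref{cor:discreteness stacks}. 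Your diagnosis of the remaining gap is also the same as the paper's: the Tannakian hypothesis $(\ast)$ (rigidity, fiber functor, noetherianity) is not a consequence of the hypotheses of Conjecture~\ref{mainth}, and the naive object-by-object reduction to $\End_\sC(X)$ cannot substitute for it because these $\sE_1$-rings are not guaranteed to be quasi-commutative, as Construction~\ref{constr:noncommutative counterexample} illustrates is necessary for Theorem~\ref{regdiscrete}. So your approach and the paper's coincide; the only thing to make explicit is that this proves a special case, not the conjecture itself, which the paper likewise leaves open.
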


In particular the conjecture implies that under the conditions, $\sC$ is equivalent to the \inftyCat of bounded complexes over $\hw$ and also to the
derived \inftyCat of $\htt$.

In fact, we will see that this conjecture not only generalizes \thmref{regdiscrete} but also \corref{cor:discreteness stacks}.

\ssec{Weight structures on stacks}

\begin{lem}\label{modulesWeight}
Let $\sC^{\otimes}$ be a stable symmetric monoidal category whose underlying $\infty$-category is compactly generated by $\{X_i\}$ and the
unit is compact.
Assume also that $\sC$ admits a compatible weight structure and the compact generators belong to the heart of the
weight structure.
Then for any weight-positive $\sE_1$-algebra object $A$ in $\sC^{\otimes}$, the \inftyCat $\Mod_A(\sC)$ admits a weight structure such that the following hold:
\begin{thmlist}
  \item
The full subcategory $\Mod_A(\sC)_{w\ge 0}$ is the closure of the set $\{A\otimes X_i\}$ under small colimits. More explicitly, it is spanned by the objects $X \in \sC$ such
that $\pi_0\Maps_A(A\otimes X_i, \Sigma^n X) = 0$ for all $i$ and all $n>0$.
  \item
The weight structure restricts to the subcategory of compact objects.
\end{thmlist}
\end{lem}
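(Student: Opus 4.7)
The plan is to apply the standard theory of weight structures generated by negative sets of compact objects, and then verify that the resulting structure restricts to compact objects. First I would invoke \lemref{modulesGeneration}, whose hypotheses are satisfied here since the unit of $\sC$ is compact and $\{X_i\}$ generates $\sC$, to conclude that $\Mod_A(\sC)$ is compactly generated by $\{A\otimes X_i\}$. The key input for constructing the weight structure is then the negativity of this set, namely
\[
\pi_0\Maps_A(A\otimes X_i, \Sigma^n(A\otimes X_j)) = 0 \quad \text{for all } i,j \text{ and all } n \ge 1.
\]
By the free-forgetful adjunction this equals $\pi_0\Maps_\sC(X_i, \Sigma^n(A\otimes X_j))$. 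Weight-positivity of $A$, the assumption $X_j \in \hw$, and tensor-compatibility of $w$ on $\sC$ together imply $A\otimes X_j \in \sC_{w\ge 0}$, so $\Sigma^n(A\otimes X_j) \in \Sigma \sC_{w\ge 0}$ for $n\ge 1$; combined with $X_i \in \sC_{w\le 0}$, the orthogonality axiom for $\sC$ yields the vanishing.

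Given a negative set of compact generators in a compactly generated stable $\infty$-category, Bondarko's generating theorem for weight structures produces a weight structure on $\Mod_A(\sC)$. I would then verify that the non-negative part admits the two equivalent descriptions in (i): the closure of $\{A \otimes X_i\}$ under small colimits on the one hand, and the subcategory of $X$ with $\pi_0\Maps_A(A\otimes X_i, \Sigma^n X) = 0$ for all $i$ and all $n > 0$ on the other. The inclusion ``closure $\subseteq$ orthogonal subcategory'' is automatic, since the vanishing condition is preserved under small colimits and holds for each generator by negativity; for the reverse, any $X$ in the orthogonal subcategory coincides with its $w_{\ge 0}$ weight piece via the weight decomposition, as its $w_{\le -1}$ piece is right-orthogonal to itself and hence zero.

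For (ii), I would show that weight decompositions of compact objects can be chosen compact. The argument is a cell-attachment: for a compact $X$, the compactness and negativity of the generators allow one to build the weight pieces as iterated cofibers of morphisms out of finite direct sums of suspensions of $A\otimes X_i$, staying within the thick subcategory of compact objects generated by $\{A\otimes X_i\}$. The main obstacle is the construction of weight decompositions in the previous step for arbitrary (not necessarily compact) objects, which requires a convergence argument via transfinite cell attachment or Brown representability; the additional finiteness needed for (ii) then follows from the observation that only finitely many cells suffice when the input is compact.
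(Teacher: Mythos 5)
Your proposal is correct and follows essentially the same route as the paper's proof: both invoke \lemref{modulesGeneration} for compact generation of $\Mod_A(\sC)$ by $\{A\otimes X_i\}$, verify negativity of this set via the free--forgetful adjunction together with weight-positivity of $A$ and compatibility of $w$ with the tensor product, and then appeal to Bondarko's generating theorem for weight structures from negative compact generators (\cite[Thm.~4.5.1(I.2)]{BondarkoWeights}). The only cosmetic difference is that for part (ii) the paper simply cites \cite[Thm.~4.3.2(II.2)]{BondarkoWeights} where you sketch the underlying cell-attachment argument (which is indeed what that theorem encapsulates), and for part (i) the paper says it follows by construction where you spell out why the two descriptions of $\Mod_A(\sC)_{w\ge 0}$ coincide.
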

\begin{proof}
By Lemma~\ref{modulesGeneration} the set $\{A\otimes X_i\}$ compactly generates
$\sC$.
Applying \cite[Theorem~4.5.2(I.2)]{BondarkoWeights}, it suffices to show that it is negative in the sense of \cite[Definition~4.3.1]{BondarkoWeights}, i.e., that $\pi_0\Maps_A(A\otimes X_i, \Sigma^n A\otimes X_j) = 0$ for all $i,j$ and $n>0$.
By adjunction we see that
$$\pi_0\Maps_A(A\otimes X_i, \Sigma^n A\otimes X_j) = \pi_0\Maps_\sC(X_i, \Sigma^n A\otimes X_j).$$
The latter group is trivial since $X_i \in \hw$ and $\Sigma^n A\otimes X_j \in \sC_{w\ge 1}$.
The first assertion follows by construction, and the second follows from \cite[Theorem~4.3.2(II.2)]{BondarkoWeights}.
\end{proof}

\sssec{}

Let $k$ be a field and $G$ a linearly reductive group scheme of finite type over $k$.
Let $R$ be a noetherian connective $\sE_{\infty}$-ring spectrum over $k$ with a $G$-action, such that $\pi_0(R)$ is a finite type $k$-algebra.
We denote by $X = [\Spec(R)/G]$ the corresponding quotient stack and by $p : \Spec(R) \to X$ the canonical morphism.

\begin{thm}\label{StacksWeightStructure}
The \inftyCat $\Qcoh(X)$ admits a unique weight structure where $\Qcoh(X)_{w\ge 0}$ is spanned by objects $M \in \Qcoh(X)$ whose underlying $R$-module $p^*(M)$ is connective.
Moreover, the weight structure restricts to the subcategory of compact objects.
\end{thm}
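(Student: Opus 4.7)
The plan is to apply \lemref{modulesWeight} with $\sC^{\otimes} = \Qcoh(BG)$ and $A = R$, viewed as an $\sE_\infty$-algebra object in $\Qcoh(BG)$, exploiting the identification $\Qcoh(X) \simeq \Mod_R(\Qcoh(BG))$ recalled just before \propref{prop:compact generation of Mod_X}. For this I first need to equip $\Qcoh(BG)$ with a compatible weight structure whose compact generators lie in the heart, and to exhibit $R$ as weight-positive.

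Since $G$ is linearly reductive over the field $k$, the additive \inftyCat $\sA$ of finite-dimensional $G$-representations over $k$ is semisimple and naturally symmetric monoidal. Applying \propref{p1} and \propref{symmonp2} to $\sA$ produces a compatible bounded weight structure on $\widehat{\sA}^c$, and one identifies the latter symmetric monoidally with the \inftyCat of compact objects of $\Qcoh(BG)$; passing to ind-completions then yields a compatible weight structure on $\Qcoh(BG)$ whose non-negative part is the closure of $\sA$ under small colimits, and whose heart contains every irreducible representation $V_i$. The unit $\sO_{BG}$ is compact by linear reductivity, the compact generation by $\{V_i\}$ is the case $R = k$ of \propref{prop:compact generation of Mod_X}, and $R$ itself is weight-positive: its underlying $k$-module is connective, and exactness of $G$-invariants implies that every object of $\Qcoh(BG)$ with connective underlying $k$-module lies in the small-colimit closure of $\sA$.

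With these hypotheses in place, \lemref{modulesWeight} produces a weight structure on $\Qcoh(X)$, together with its restriction to compact objects, whose non-negative part is the small-colimit closure of $\{R \otimes V_i\}$. To match this description with the connectivity one in the theorem statement, I use the characterization from \lemref{modulesWeight}(i): an object $M \in \Qcoh(X)$ belongs to $\Qcoh(X)_{w\ge 0}$ iff $\pi_0 \Maps_R(R \otimes V_i, \Sigma^n M) = 0$ for all $i$ and all $n > 0$. The free-forgetful adjunction rewrites this mapping space as $\Maps_{\Qcoh(BG)}(V_i, \Sigma^n M)$, and exactness of $G$-invariants identifies its $\pi_0$ with $\Hom_G(V_i, \pi_{-n}(M))$; demanding vanishing for every $V_i$ and every $n > 0$ is exactly the vanishing of $\pi_{-n}(M)$ of the underlying spectrum of $M$, which coincides with that of $p^*(M)$. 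Uniqueness given $\sC_{w\ge 0}$ is standard for weight structures. The main obstacle, as I see it, is the rigorous setup of the compatible weight structure on $\Qcoh(BG)$ together with the verification that $R$ itself lies in its non-negative part; both steps rely crucially on the linear reductivity of $G$, which is why the hypothesis is indispensable.
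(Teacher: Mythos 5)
Your proposal is correct and follows the same strategy as the paper's proof: equip $\Qcoh(BG)$ with a compatible weight structure whose heart contains the irreducible representations, observe that $R$ is weight-positive because it is connective and $G$-invariants are exact, apply \lemref{modulesWeight} under the identification $\Qcoh(X) \simeq \Mod_R(\Qcoh(BG))$, and then translate the mapping-space characterization of $\Qcoh(X)_{w\ge 0}$ from that lemma into the connectivity condition via the free–forgetful adjunction and semisimplicity of $\Rep(G)$. The only cosmetic difference is how the weight structure on $\Qcoh(BG)$ is produced: the paper invokes Bondarko's generation theorem directly on the compactly generated category, while you construct it from $\propref{p1}$/$\propref{symmonp2}$ applied to the additive category of finite-dimensional representations and then pass to ind-completions — these yield the same structure.
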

\begin{proof}
By \cite[Theorem~A(c)]{hallandrydh} the \inftyCat $\Qcoh(BG)$ admits a set of compact generators given by irreducible $G$-representations.
Since $G$ is linearly reductive the category of representations of $G$ is
semisimple.
In particular, $\pi_0\Maps_{\Qcoh(BG)}(P_1,\Sigma^iP_2) = \on{Ext}_G^i(P_1,P_2) = 0$ for all $i>0$ and all irreducible representations $P_1,P_2$.
Hence, the set of irreducible representations is negative in the sense of \cite[Definition 4.3.1]{BondarkoWeights} and, by
\cite[Theorem 4.5.2(I.2)]{BondarkoWeights}, gives rise to a weight structure on $\Qcoh(BG)$ such that all irreducible representations belong to
its heart.
It is clearly compatible with the symmetric monoidal structure.

The $G$-equivariant ring spectrum $R$, considered as an object of
$\Qcoh(BG)$, is weight-positive.
Therefore by Lemma~\ref{modulesWeight} we obtain a weight structure on $\Mod_R(\Qcoh(BG))$ that restricts to the subcategory of compact objects.
Under the equivalence $\Qcoh(X) \simeq \Mod_R(\Qcoh(BG))$ (see \cite[Proposition~2.7]{Iwanari}), this induces the desired weight structure on $\Qcoh(BG)$.

It remains to show that $\Qcoh(X)_{\ge0}$ has the claimed description.
Any object of $\Qcoh(X)_{w\ge 0}$ is a colimit of objects of the form
$R\otimes V_i$, so is connective. Conversely, for any connective object
$M \in \Qcoh(X)$, any $n>0$, and any $i$, we have
  $$\pi_0\Maps_{\Qcoh(X)}(R\otimes V_i, \Sigma^n M) = \pi_0\Maps_{\Qcoh(X)}(V_i, \Sigma^n M) = H^{-n}(G, \uHom_k(V_i, M)) = 0,$$
so $M$ belongs to $\Qcoh(X)_{w\ge 0}$.
\end{proof}

\begin{rem}\label{rem:weight structure on Mod_X}
Since $\Qcoh(X)_{w\ge 0}$ is spanned by the connective objects, the weight structure is adjacent to the canonical t-structure.
The induced weight structure on the subcategory of compact objects admits an adjacent t-structure if and only if the canonical t-structure restricts to the subcategory of compact objects, in other words, if and only if $X$ is homologically regular (see Definition~\ref{regularity:stacks}).
Moreover, $R$ is bounded if and only if there exists an $N>0$ such that
$\pi_n\Maps_{\Qcoh(X)}(P,Q) = 0$ for all $n\ge N$ and all $P,Q$ from the heart of the weight structure.
Indeed, if $R$ is bounded then for all irreducible $G$-representations $V_i,V_j$ we see that
  $$ \Maps_{\Qcoh(X)}(R \otimes V_i, R\otimes V_j) = \Maps_{\Qcoh(BG)}(V_i, R\otimes V_j) = \uHom_{k}(V_i, R\otimes_k V_j)^G $$
is bounded, since the functor of $G$-invariants is exact.
Since $R \otimes V_i$ generate the heart of the weight structure, the same holds for $\Maps_{\Qcoh(X)}(P,Q)$ with $P,Q$ arbitrary objects in the heart of the weight structure.
Conversely, suppose there is an integer $N$ such that for all $n\ge N$, $\pi_n \Maps_{\Qcoh(X)}(R \otimes V_i, R\otimes k) = 0$ for all irreducible representations $V_i$, where $k$ is the trivial representation.
Then by adjunction it follows that $R \in \Qcoh(BG)$ is bounded (with respect to the canonical t-structure).
In particular, its underlying $k$-module is bounded.
\end{rem}

By \remref{rem:weight structure on Mod_X}, the conditions of \conjref{mainth} are satisfied for $\Qcoh(X)$.
In this case, the conjecture is verified by \corref{cor:discreteness stacks}.
That is, we have:

\begin{exam}\label{exam:regular stacks}
\conjref{mainth} holds for $\sC$ under the following additional assumption:
\begin{thmlist}
  \item[$\mathrm{(*)}$] There is a symmetric monoidal weight-exact equivalence of stable \inftyCats
    \begin{equation*}
      \sC \simeq \Qcoh([\Spec(R)/G]),
    \end{equation*}
  where $G$ is a linearly reductive group scheme of finite type over a field $k$, and $R$ is a noetherian $\Einfty$-$k$-algebra with $G$-action such that $\pi_0(R)$ is a finite type $k$-algebra.
\end{thmlist}
\end{exam}

\begin{rem}\label{rem:Iwanari}
In characteristic zero, the Tannaka formalism of Iwanari \cite[Thm.~4.1, Prop.~4.9]{Iwanari} gives an intrinsic characterization of $\sC$ satisfying the condition $(\ast)$.
Namely, it is equivalent to the following two conditions:
\begin{defnlist}
\item
$\hw$ is generated as a symmetric monoidal additive $\infty$-category by a wedge-finite\footnotemark~object $C$ of dimension $d$ and its dual $C^{\vee}$.
\footnotetext{Recall that $C$ is \emph{wedge-finite} of dimension $d\ge 0$ if $\Lambda^{d+1}(C) = 0$ and $\Lambda^d(C)$ is $\otimes$-invertible.
Any wedge-finite object $C$ is dualizable, and we write $C^\vee$ for its dual.
See \cite[Def.~1.1, Rem.~1.4]{Iwanari}.}

\item
The $\Einfty$-ring spectrum
  \begin{equation*}
    A = \bigoplus_{\lambda \in \bZ^d_{\star}} \hw(S_{\lambda}C,\un)\otimes_k S_{\lambda} K
  \end{equation*}
is noetherian and $\pi_0(A)$ is of finite type over $k$.
Here $\bZ^d_{\star}$ is the set of $d$-tuples $(\lambda_1,\cdots,\lambda_d) \in \bZ^d$ such that $\lambda_1 \ge \cdots \ge \lambda_d$,
$S_{\lambda}$ is the Schur functor corresponding to $\lambda$,
and $K$ is the regular representation of $\mrm{GL}_d$.
\end{defnlist}

\end{rem}

\ssec{Gluing weight structures}

As we have seen in \secref{ncmcase} there exist bounded regular $\sE_1$-rings that are not discrete,
given by some triangular matrix rings.
In particular this shows that Conjecture~\ref{mainth} could fail for weight structures that are not
compatible with any symmetric monoidal structure.
It turns out that the conjecture in such generality fails consistently:
we prove a general gluing result for adjacent structures which gives us plenty of counterexamples to the
conjecture as a
byproduct.
For instance, it implies the following statement:

\begin{cor}\label{triangular_regular}
  The triangular matrix \Ering spectrum
  $$ T =
  \begin{pmatrix}
   R & M \\
   0 & S
  \end{pmatrix}
  $$
  is regular, where $R$ and $S$ are regular \Ering spectra and $M$ is a connective $R$-$S$-bimodule that is
  perfect as an $S$-module.
\end{cor}


Recall the following definition from \cite{BlumbergGepnerTabuada}:

\begin{defn}\label{defn:split ses}
A \emph{split short exact sequence} (or semi-orthogonal decomposition) of stable $\infty$-categories is a diagram
  $$
  \begin{tikzcd}
    \sC \arrow[r, shift left = 1, "i_{\sC}"]
      & \sE\arrow[l,shift left = 1, "L_{\sC}"] \arrow[r, shift left = 1, "L_{\sD}"]
      & \sD \arrow[l,shift left = 1, "i_{\sD}"]
  \end{tikzcd}
  $$
satisfying the following conditions:
\begin{defnlist}
  \item
The functors $i_{\sC}$ and $i_{\sD}$ are fully faithful.
  \item
The functor $L_{\sC}$ is right adjoint to $i_{\sC}$, and $i_{\sD}$ is right adjoint to $L_{\sD}$.
  \item
The essential image of $i_{\sD}$ is the right orthogonal to the essential image of $i_{\sC}$.
\end{defnlist}
\end{defn}

\begin{lem}\label{fibseq}
Given a split short exact sequence of stable $\infty$-categories as above, the unit and counit maps of the adjunctions form an exact triangle
  $$i_{\sC}L_{\sC}(X) \to X \to i_{\sD}L_{\sD}(X).$$
\end{lem}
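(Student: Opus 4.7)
The plan is to complete the counit $i_\sC L_\sC(X) \to X$ to an exact triangle $i_\sC L_\sC(X) \to X \to F$, produce a canonical comparison map $\alpha \colon F \to i_\sD L_\sD(X)$, and verify it is an equivalence.

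First I would observe that the composite $i_\sC L_\sC(X) \to X \to i_\sD L_\sD(X)$ of the counit and the unit is null-homotopic, because condition (iii) of \defref{defn:split ses} forces $\Maps_\sE(i_\sC Z, i_\sD W) \simeq 0$ for all $Z \in \sC$ and $W \in \sD$. Consequently, the unit factors (uniquely up to contractible choice) as $X \to F \xrightarrow{\alpha} i_\sD L_\sD(X)$, and the whole problem reduces to showing that $\alpha$ is an equivalence.

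Next I would check that $F$ already lies in the essential image of $i_\sD$. For any $Y \in \sC$, applying $\Maps_\sE(i_\sC Y, -)$ to the defining triangle, the map induced by the counit is identified—via the adjunction $i_\sC \dashv L_\sC$ together with full faithfulness of $i_\sC$—with the identity on $\Maps_\sC(Y, L_\sC X)$. Hence $\Maps_\sE(i_\sC Y, F) \simeq 0$ for all $Y \in \sC$, and condition (iii) places $F$ in the essential image of $i_\sD$.

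Finally, I would apply $L_\sD$ to the triangle. Since $L_\sD i_\sC \simeq 0$ (indeed $\Maps_\sD(L_\sD i_\sC Z, W) \simeq \Maps_\sE(i_\sC Z, i_\sD W) \simeq 0$ for all $W$), the induced map $L_\sD X \to L_\sD F$ is an equivalence. Applying $L_\sD$ to the factorization $X \to F \xrightarrow{\alpha} i_\sD L_\sD X$ of the unit, the triangle identity for the adjunction $L_\sD \dashv i_\sD$ forces the total composite to be the identity on $L_\sD X$, so $L_\sD \alpha$ is an equivalence. Since $i_\sD$ is fully faithful with $F$ and $i_\sD L_\sD X$ both in its essential image—on which $L_\sD$ restricts to an equivalence—it follows that $\alpha$ itself is an equivalence. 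I do not anticipate any substantial obstacle: the argument is a routine adjunction chase, and the only delicate point is matching $\alpha$ (constructed by the universal property of the cofiber) with the canonical identification $L_\sD F \simeq L_\sD X$ via the triangle identity.
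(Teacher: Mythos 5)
Your argument is correct, and it is the mirror image of the paper's. The paper starts from the unit $X\to i_\sD L_\sD X$, observes that its \emph{fiber} must lie in the essential image of $i_\sC$, and identifies it with $i_\sC L_\sC X$; you instead start from the counit $i_\sC L_\sC X\to X$, show its \emph{cofiber} $F$ lies in the essential image of $i_\sD$, and identify it with $i_\sD L_\sD X$. The two are dual, but yours has a small advantage of economy: your step that places $F$ in the essential image of $i_\sD$ invokes condition (iii) of \defref{defn:split ses} exactly as stated, since (iii) literally characterizes the image of $i_\sD$ as the right orthogonal of the image of $i_\sC$. The paper's version needs the converse identification $^{\perp}(i_\sD\sD)=i_\sC\sC$, which does follow from (i)--(iii) but is left implicit in the one-line proof. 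Your concluding adjunction chase (full faithfulness of $i_\sD$ makes $L_\sD$ conservative on its essential image, and the triangle identity forces $L_\sD\alpha$ to be an equivalence because $L_\sD i_\sC\simeq 0$) is sound; it parallels, on the $\sD$-side, the paper's use of $Y\simeq L_\sC i_\sC Y\isoto L_\sC X$ on the $\sC$-side.
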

\begin{proof}
Condition (iii) of the definition implies that the fiber of the map $X \to i_{\sD}L_{\sD} X$ has the form $i_{\sC}Y$ for some $Y$.
Now $Y \simeq L_{\sC}i_{\sC}Y \to L_{\sC}X$ is an equivalence.
\end{proof}

The main result of this section is as follows:

\begin{thm}\label{semiorthogonal}
Suppose given a split short exact sequence of stable $\infty$-categories:
  \begin{equation*}
    \begin{tikzcd}
      \sC \arrow[r, shift left = 1, "i_{\sC}"]
      & \sE\arrow[l,shift left = 1, "L_{\sC}"] \arrow[r, shift left = 1, "L_{\sD}"]
      & \sD \arrow[l,shift left = 1, "i_{\sD}"].
    \end{tikzcd}
  \end{equation*}
Assume that $\sC$ and $\sD$ admit weight structures such that $\pi_0\sE(i_{\sD}X, i_{\sC}Y) = 0$ for any $X \in \sD_{w\le 0}$ and $Y \in \Sigma\sC_{w\ge 0}$.
Then there exists a unique weight structure on $\sE$ such that all the functors in the diagram are weight-exact.
Moreover, if $i_{\sD}$ has a right adjoint $R_{\sD}$ and the weight structures on $\sC$ and $\sD$ admit adjacent t-structures, then so does the weight structure on $\sE$.
\end{thm}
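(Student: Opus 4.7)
The plan is to define
\begin{equation*}
\sE_{w\ge 0} := \{Z \in \sE : L_{\sC}(Z) \in \sC_{w\ge 0} \text{ and } L_{\sD}(Z) \in \sD_{w\ge 0}\},
\end{equation*}
and $\sE_{w\le 0}$ analogously, then verify the weight-structure axioms. Semi-invariance under shifts and Karoubi closure are immediate from exactness of $L_{\sC}$ and $L_{\sD}$. For orthogonality, given $X \in \sE_{w\le 0}$ and $Y \in \sE_{w\ge 0}$, I would apply $\Maps_\sE(-,\Sigma Y)$ to the triangle $i_{\sC}L_{\sC}X \to X \to i_{\sD}L_{\sD}X$ provided by \lemref{fibseq}, reducing to the vanishing of two groups. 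The term $\pi_0\Maps_\sE(i_{\sC}L_{\sC}X, \Sigma Y) \cong \pi_0\Maps_\sC(L_{\sC}X, \Sigma L_{\sC}Y)$ vanishes by orthogonality in $\sC$. For $\pi_0\Maps_\sE(i_{\sD}L_{\sD}X, \Sigma Y)$, I further apply $\Maps_\sE(i_{\sD}L_{\sD}X, -)$ to the corresponding triangle for $\Sigma Y$: the contribution from $i_{\sD}L_{\sD}\Sigma Y$ reduces by adjunction to orthogonality in $\sD$, while the contribution from $i_{\sC}L_{\sC}\Sigma Y$ vanishes precisely by the compatibility hypothesis.

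The crux of the argument, and the main obstacle, is the construction of weight decompositions. Given $Z \in \sE$, I would fix weight decompositions $A_{\le} \to L_{\sC}Z \to A_{\ge}$ in $\sC$ and $B_{\le} \to L_{\sD}Z \to B_{\ge}$ in $\sD$, and denote by $\partial \colon i_{\sD}L_{\sD}Z \to \Sigma i_{\sC}L_{\sC}Z$ the boundary classifying the fiber sequence of $Z$. The candidate $w_{\le 0}Z$ should fit in a triangle $i_{\sC}A_{\le} \to w_{\le 0}Z \to i_{\sD}B_{\le}$ together with a structure map to $Z$ lying over $\partial$; constructing it amounts to lifting the restricted boundary $i_{\sD}B_{\le} \to \Sigma i_{\sC}L_{\sC}Z$ through $\Sigma i_{\sC}A_{\le}$. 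The obstruction to such a lift sits in $\pi_0\Maps_\sE(i_{\sD}B_{\le}, \Sigma i_{\sC}A_{\ge})$, which vanishes by the compatibility hypothesis (applied with $X = B_{\le} \in \sD_{w\le 0}$ and $Y = A_{\ge} \in \sC_{w\ge 0}$). The resulting cofiber $w_{\ge 1}Z$ fits in $i_{\sC}A_{\ge} \to w_{\ge 1}Z \to i_{\sD}B_{\ge}$, and using the semi-orthogonality identities $L_{\sC}i_{\sD} = 0 = L_{\sD}i_{\sC}$, both $w_{\le 0}Z$ and $w_{\ge 1}Z$ land in the prescribed parts of the candidate structure.

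Weight-exactness of the four functors is then forced by the definitions together with the semi-orthogonality identities. For uniqueness, any alternative weight structure making all four functors weight-exact must satisfy $\sE_{w\ge 0}^{\mathrm{alt}} \subseteq L_{\sC}^{-1}(\sC_{w\ge 0}) \cap L_{\sD}^{-1}(\sD_{w\ge 0})$ (from weight-exactness of $L_{\sC}, L_{\sD}$), and equality follows by taking weight decompositions in the alternative structure and tracing through $L_{\sC}, L_{\sD}$ to show that the negative part maps to zero in both $\sC$ and $\sD$, hence vanishes. For the adjacent t-structure, the existence of $R_{\sD}$ promotes $i_{\sD}$ to a left adjoint (of $R_{\sD}$), so both $i_{\sC}$ and $i_{\sD}$ preserve small colimits; from this one deduces that $\sE$ has all coproducts (constructed by gluing along the induced boundary maps) and that $L_{\sC}$ and $L_{\sD}$ preserve them (the former via the fiber-sequence identification $i_{\sC}L_{\sC}Z \simeq \mathrm{fib}(Z \to i_{\sD}L_{\sD}Z)$ and exactness of coproducts in stable $\infty$-categories). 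Consequently $\sE_{w\ge 0}$ is closed under arbitrary coproducts, and \thmref{adjtstr} applied to $\sE$ yields the adjacent t-structure.
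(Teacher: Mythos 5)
Your construction of the weight structure itself is sound and is essentially the paper's argument: the classes you define via $L_{\sC}$ and $L_{\sD}$ coincide with the extension-closures of $i_{\sC}\sC_{w\ge 0}\cup i_{\sD}\sD_{w\ge 0}$ (resp.\ the $w\le 0$ analogue) used in the paper, because of the triangle of \lemref{fibseq} and the identities $L_{\sC}i_{\sD}=0=L_{\sD}i_{\sC}$; your obstruction-theoretic gluing of the two weight decompositions along the boundary map is the same step the paper carries out with a homotopy-commutative square plus Lemma~1.1.11 of \cite{bbd} (note only that the obstruction lives against $i_{\sC}\Sigma A_{\ge}$ with $\Sigma A_{\ge}\in\Sigma\sC_{w\ge 0}$, which is still covered by the hypothesis), and the orthogonality check and uniqueness sketch are fine.

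The adjacent t-structure part, however, has a genuine gap. You propose to show $\sE_{w\ge 0}$ is closed under coproducts and then invoke \thmref{adjtstr}, but that theorem requires $\sE$ to admit all coproducts and satisfy Brown representability, neither of which is assumed here --- and both fail in the intended applications, where $\sC,\sD,\sE$ are small categories of compact objects (e.g.\ $\Mod_R^{\perf}$ for a triangular matrix ring, the motivating example of \secref{ncmcase}). Your claim that $\sE$ acquires all coproducts from the semi-orthogonal decomposition is unfounded, since $\sC$ and $\sD$ are not assumed to have them either. A telltale sign that this route cannot work is that your argument never uses the assumed adjacent t-structures on $\sC$ and $\sD$: if Bondarko's theorem applied, adjacency on $\sE$ would come for free, contradicting the fact that bounded weight structures (e.g.\ on $\Mod_R^{\perf}$ for non-regular $R$) generally admit no adjacent t-structure. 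What is actually required --- and what occupies most of the paper's proof --- is an explicit construction of truncation triangles $\tau_{\ge 0}X\to X$ in $\sE$ glued from those in $\sC$ and $\sD$: this is easy for $X$ in the image of $i_{\sD}$, already needs $R_{\sD}$ for $X$ in the image of $i_{\sC}$, and for general $X$ needs a correction term, namely the object $I=\Im\bigl(\pi_0 R_{\sD}X\to\pi_0 L_{\sD}X\bigr)$ of $\htt_{\sD}$, to repair the failure of orthogonality against $i_{\sD}(\hw_{\sD})$. None of this is captured by an appeal to \thmref{adjtstr}.
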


The first part of the statement is related to \cite[\S8.2]{BondarkoWeights}.
However, it is not directly applicable here, as the gluing there requires the existence of two extra adjoints while the weight-exactness of all functors is not required.

To deduce Corollary~\ref{triangular_regular} it suffices to set
$\sC = \Mod_R^\perf, \sD = \Mod_S^\perf, \sE = \Mod_T^\perf$ with obvious functors between them. Note that the vanishing condition corresponds to connectivity of $M$ while the existence of a right adjoint $R_{\sD}$ translates as perfectness of $M$.

\ssec{Proof of \thmref{semiorthogonal}}
\label{ssec:proof of semiorthogonal}

\sssec{Weight structure}

We set $\sE_{w\ge 0}$ and $\sE_{w\le 0}$ to be the extension-closures of the unions $i_{\sC}\sC_{w\ge 0} \cup i_{\sD}\sD_{w\ge 0}$
and $i_{\sC}\sC_{w\le 0} \cup i_{\sD}\sD_{w\le 0}$, respectively.
These classes are semi-invariant with respect to translations, and the orthogonality axiom follows from fully faithfulness of $i_{\sC}$ and $i_{\sD}$ and from the extra condition in the statement.
It suffices to construct a weight decomposition for an object $X \in \sE$.

By Lemma \ref{fibseq} we have an exact triangle $i_{\sC}L_{\sC}X \to X \to i_{\sD}L_{\sD}X$.
Take any weight decompositions of $L_{\sC}X$ and $L_{\sD}X$.
Since there are no non-zero maps $\Omega i_{\sD}w_{\le 0} L_{\sD} X \to i_{\sC}w_{\ge 1} L_{\sC}X$ we obtain a homotopy commutative square
$$
\begin{tikzcd}
i_{\sD}w_{\le 0} L_{\sD} \Omega X\arrow[r]\arrow[d] & i_{\sC}w_{\le 0} L_{\sC}X\arrow[d]\\
i_{\sD}L_{\sD} \Omega X\arrow[r] & i_{\sC}Y
\end{tikzcd}
$$
Using Lemma 1.1.11 of \cite{bbd} we get a commutative diagram
$$
\begin{tikzcd}
i_{\sC}w_{\le 0} L_{\sC}X\arrow[d]\arrow[r] &X'\arrow[d]\arrow[r]& i_{\sD}w_{\le 0} L_{\sD}X\arrow[d]\\
i_{\sC}L_{\sC}X \arrow[r]\arrow[d] & X \arrow[r]\arrow[d] &i_{\sD}L_{\sD}X\arrow[d]\\
i_{\sC}w_{\ge 1} L_{\sC}X \arrow[r] &X''\arrow[r]& i_{\sD}w_{\ge 1} L_{\sD}X
\end{tikzcd}
$$
in the homotopy category of $\sE$, whose rows and columns come from exact triangles.
In particular, $X' \in \sE_{w\le 0}$ and $X'' \in \Sigma\sE_{w\ge 0}$, so the exact triangle $X' \to X \to X''$ is a weight decomposition.

\sssec{Adjacent t-structure}

We now assume that the weight structures on $\sC$ and $\sD$ admit adjacent t-structures, and that $i_D$ admits a right adjoint $R_D$.
In this case we will show that the weight structure on $\sE$ constructed above also admits an adjacent t-structure.

It suffices to construct maps $\tau \colon \tau_{\ge 0} X \to X$ for all $X$ where $\tau_{\ge 0} X \in \sE_{w\ge 0}$ and $\pi_0\sE(Y, \Cofib(\tau)) =0$ for any $Y \in \sE_{w\ge 0}$.
First consider the case $X=i_{\sD}Y$.
Then the map $i_{\sD}\tau \colon i_{\sD}\tau_{\ge 0} Y \to i_{\sD}Y$ satisfies the conditions. Indeed,
$$\pi_0\sE(i_{\sD}Z, \Cofib(i_{\sD}\tau))= \pi_0\sE(i_{\sD}Z, i_{\sD}\Cofib(\tau))=\pi_0\sD(Z,\Cofib(\tau))=0$$
for any $Z \in \sD_{w\ge 0}$ and
$$\pi_0\sE(i_{\sC}Z, \Cofib(i_{\sD}\tau)) = \pi_0\sE(i_{\sC}Z, i_{\sD}\Cofib(\tau))= \pi_0\sC(Z, L_{\sC}i_{\sD}\Cofib(\tau)) = 0$$
for any $Z \in \sC$. So, $\pi_0\sE(Z,\Cofib(i_{\sD}\tau))=0$ for any $Z$ from the extension-closure of $i_{\sC}\sC_{w\ge 0}\cup i_{\sD}\sD_{w\ge 0}$.

Next consider the case $X = i_{\sC}Y$.
The cofiber $Y'$ of the map $i_{\sC}\tau \colon i_{\sC}\tau_{\ge 0}Y \to i_{\sC}Y$ satisfies
$$\pi_0\sE(i_{\sC}Z, Y')= \pi_0\sC(Z, \Cofib(\tau))=0.$$
for any $Z \in \sC_{w\ge 0}$.
This orthogonality property is also satisfied by the cofiber $Y''$ of the map $i_{\sD}\tau_{\ge 0}R_{\sD}Y' \to i_{\sD}R_{\sD}Y' \to Y'$ since
$$\pi_0\sE(i_{\sC}Z, i_{\sD}\tau_{\ge 0}R_{\sD}Y')= \pi_0\sE(Z, L_{\sC}i_{\sD}\tau_{\ge 0}R_{\sD}Y') = 0$$
for any $Z\in \sC$.
The map
$$\pi_0\sE(i_{\sD}Z, i_{\sD}\tau_{\ge 0}R_{\sD}Y')\simeq \pi_0\sD(Z, \tau_{\ge 0}R_{\sD}Y') \to \pi_0\sD(Z, R_{\sD}Y') \simeq \pi_0\sE(i_{\sD}Z, Y')$$
is an isomorphism for $Z \in \sD_{w \ge 0}$ and an injection for $Z \in \Omega\sD_{w \ge 0}$ by orthogonality properties of
the t-structure on $\sD$. Therefore from the long exact sequence associated to an exact triangle we see that $\pi_0\sE(i_{\sD}Z,Y'')=0$ for
any $Z \in \sD_{w\ge 0}$. The fiber $F$ of the map $X \to Y''$ is an extension of $i_{\sD}\tau_{\ge 0}R_{\sD}Y'=\Fib(Y' \to Y'')$ by
$i_{\sC}\tau_{\ge 0}Y = \Fib(X \to Y')$, so it belongs to $\sE_{w\ge 0}$ and the map $F \to X$ satisfies the desired conditions.

Finally let $X$ be arbitrary.
By Lemma \ref{fibseq} there is an exact triangle $\Omega i_{\sD}L_{\sD}X \stackrel{p}\to i_{\sC}L_{\sC}X \to X$.
The already constructed maps $\tau_{\ge 0} \Omega i_{\sD}L_{\sD} \to \Omega i_{\sD}L_{\sD}X$ and
$\tau_{\ge 0} i_{\sC}L_{\sC}X \to i_{\sC}L_{\sC}X$ fit into a homotopy commutative diagram
$$
\begin{tikzcd}
\tau_{\ge 0} \Omega i_{\sD}L_{\sD}X \arrow[d, "\tau_{\sD}"]\arrow[r, dotted, "\tau_{\ge 0} p"] & \tau_{\ge 0} i_{\sC}L_{\sC}X\arrow[d,"\tau_{\sC}"]\\
\Omega i_{\sD}L_{\sD}X\arrow[r,"p"]  & i_{\sC}L_{\sC}X.
\end{tikzcd}
$$
Applying Lemma 1.1.11 of \cite{bbd} we get a homotopy commutative diagram
$$
\begin{tikzcd}\label{diagram}
\tau_{\ge 0} \Omega i_{\sD}L_{\sD}X \arrow[d, "\tau_{\sD}"]\arrow[r, "\tau_{\ge 0} p"] & \tau_{\ge 0} i_{\sC}L_{\sC}X\arrow[d,"\tau_{\sC}"] \arrow[r] & P\arrow[d]\\
\Omega i_{\sD}L_{\sD}X\arrow[r,"p"] \arrow[d] & i_{\sC}L_{\sC}X \arrow[r]\arrow[d] & X\arrow[d]\\
\tau_{\le -1}\Omega i_{\sD}L_{\sD}X\arrow[r]  & \tau_{\le -1}i_{\sC}L_{\sC}X\arrow[r]  & N\\
\end{tikzcd}
$$
whose rows and columns are exact triangles. 
We note that $P$ belongs to $\sE_{w\ge 0}$ and $\pi_0\sE(Y, N) =0$ for any $Y \in \sE_{w\ge 1}$. Moreover,
$\tau_{\le -1} \Omega i_{\sD}L_{\sD} X = i_{\sD} \tau_{\le -1} \Omega L_{\sD} X$ by construction of $\tau_{\sD}$, so $\pi_0\sE(Y, N) =0$
for any $Y \in i_{\sC}\sC_{w\ge 0}$.
However,  the group $\pi_0\sE(i_{\sD}Y,N)$ might be non-zero for $Y \in \hw_{\sD}$. The issue is fixed using the following.

{\bf Claim.} There exists an object $I \in \htt_{\sD}$ and a map $\tau_0\colon i_{\sD} I\to N$ such that the map $\pi_0\sE(i_{\sD} Y, \tau_0)$
is an isomorphism for any $Y \in \hw_{\sD}$.

Indeed, assume the claim is proven.
Then it implies the vanishing of $\pi_0\sE(i_{\sD}Y, \Cofib(\tau_0))$ for any $Y \in \sD_{w\ge 0}$.
Moreover, $\pi_0\sE(i_{\sC}Y, \Cofib(\tau_0))$ is just isomorphic to $\pi_0\sE(i_{\sC}Y,N)$ for $Y \in \sC$, so $\pi_0(Y, \Cofib(\tau_0)) = 0$ for
any $Y \in \sE_{w\ge 0}$.
The fiber $\tau_{\ge 0} X$ of the map $X \to \Cofib(\tau_0)$ belongs to $\sE_{w\ge 0}$ as it is an extension of $i_{\sD}I$ by $P$. So $\tau_{\ge 0} X \to X$ gives us the desired map. It suffices to prove the claim.

From the bottom and the middle exact triangles in the diagram above we see that for $Y \in \hw_{\sD}$
$$\pi_0\sE(i_{\sD}Y,N) \simeq im (\pi_0\sE(i_{\sD}Y,X) \stackrel{u}\to \pi_0\sE(i_{\sD} Y,\Sigma \tau_{\le -1}\Omega i_{\sD}L_{\sD}X))$$
and the isomorphism is induced by the map $N \to \Sigma \tau_{\le -1}\Omega i_{\sD}L_{\sD}X$.

Since $i_{\sD}$ is fully faithful, the composition of the unit and the counit $i_{\sD}R_{\sD}X \to X \to i_{\sD}L_{\sD} X$ defines a map
$\gamma \colon R_{\sD}X \to L_{\sD} X$.
We define $I$ to be the image in the abelian category $\htt_{\sD}$ of the map $\pi_0\gamma \colon \pi_0R_{\sD} X \to \pi_0L_{\sD} X$.
There is an obvious map $f$ from $i_{\sD}I$ to $\Sigma \tau_{\le -1}\Omega i_{\sD}L_{\sD}X = i_{\sD}\tau_{\le 0}L_{\sD} X$. The following commutative diagram shows that $f$ maps
$\pi_0\sE(i_{\sD}Y, i_{\sD}I)$ isomorphically to the subset of $\pi_0\sE(i_{\sD} Y, i_{\sD}\tau_{\le 0}L_{\sD} X)$ isomorphic to $\pi_0\sE(i_{\sD}Y,N)$.

$$
\begin{tikzcd}
\pi_0\sE(i_{\sD}Y, i_{\sD} \pi_0R_{\sD}X)\arrow[r, two heads]\arrow[d,"(1)"] & \pi_0\sE(i_{\sD}Y,i_{\sD}I) \arrow[r,hook]\arrow[dr, "f"]& \pi_0\sE(i_{\sD}Y, i_{\sD} \pi_0L_{\sD}X)\arrow[d,"(2)"]\\
\pi_0\sE(i_{\sD}Y, i_{\sD} \tau_{\le 0}R_{\sD}X)\arrow[rr] && \pi_0\sE(i_{\sD}Y, i_{\sD} \tau_{\le 0}L_{\sD}X)\\
\pi_0\sE(i_{\sD}Y, i_{\sD}R_{\sD}X) \arrow[r,"(4)"]\arrow[u,"(3)", swap] & \pi_0\sE(i_{\sD}Y, X) \arrow[r]\arrow[ur,"u"] & \pi_0\sE(i_{\sD}Y,i_{\sD} \tau_{\le 0}L_{\sD}X)\arrow[u,"="]
\end{tikzcd}
$$
Indeed, considering the long exact sequences the maps (1) and (2) fit into, we see that they are isomorphisms by the orthogonality axiom for weight structures. The map (3) is a surjection for the same reason. The map (4) becomes the
map $\pi_0\sD(Y, R_{\sD}i_{\sD}R_{\sD}X) \to \pi_0\sD(Y, R_{\sD}X)$ under the adjunction isomorphism for $i_{\sD}$ and $R_{\sD}$.
The latter map is an isomorphism because $\id_{\sD} \to R_{\sD}i_{\sD}$ is an equivalence.
Therefore we see that the images of $\pi_0\sE(i_{\sD}Y,i_{\sD}I)$ and of $\pi_0\sE(i_{\sD}Y, X)$ in $\pi_0\sE(i_{\sD}Y, i_{\sD} \tau_{\le 0}L_{\sD}X)$ coincide and the former maps injectively onto the image.

Now it suffices to construct a map $i_{\sD}I \to N$ that makes the triangle
$$
\begin{tikzcd}
N \arrow[r] & i_{\sD}\tau_{\le 0}L_{\sD} X\\
i_{\sD}I \arrow[u, dotted]\arrow[ur, "f"]
\end{tikzcd}
$$
commute.
Since $N$ is a fiber of the map $i_{\sD}\tau_{\le 0}L_{\sD} X \to \Sigma \tau_{\le -1} i_{\sC}L_{\sC}X$, it suffices to prove that the
composite map $i_{\sD}I \to \Sigma \tau_{\le -1} i_{\sC}L_{\sC}X$ is null-homotopic. We will show that the element $f$ in the upper left corner
of the following commutative diagram becomes trivial after applying the horizontal map.
$$
\begin{tikzcd}
\pi_0\sE(i_{\sD}I, i_{\sD}\tau_{\le 0}L_{\sD} X) \arrow[r]\arrow[d, "(1)"] & \pi_0\sE(i_{\sD}I, \Sigma \tau_{\le -1} i_{\sC}L_{\sC}X)\arrow[d, "(2)"]\\
\pi_0\sE(i_{\sD}\pi_0R_{\sD}X, i_{\sD}\tau_{\le 0}L_{\sD} X) \arrow[r] & \pi_0\sE(i_{\sD}\pi_0R_{\sD}X, \Sigma \tau_{\le -1} i_{\sC}L_{\sC}X)\\
\pi_0\sE(i_{\sD}\tau_{\le 0}R_{\sD}X, i_{\sD}\tau_{\le 0}L_{\sD} X) \arrow[r]\arrow[u,swap,"(3)"]\arrow[d,"(4)"] & \pi_0\sE(i_{\sD}\tau_{\le 0}R_{\sD}X, \Sigma \tau_{\le -1} i_{\sC}L_{\sC}X)\arrow[u]\arrow[d,"(5)"]\\
\pi_0\sE(i_{\sD}R_{\sD}X, i_{\sD}\tau_{\le 0}L_{\sD} X) \arrow[r] & \pi_0\sE(i_{\sD}R_{\sD}X, \Sigma \tau_{\le -1} i_{\sC}L_{\sC}X)
\end{tikzcd}
$$
The long exact sequence associated to the exact triangle
$$i_{\sD}K \to i_{\sD}\pi_0R_{\sD}X \to i_{\sD}I$$
where $K$ denotes the kernel of the surjection $\pi_0R_{\sD}X \to I$ in the abelian category $\htt_{\sD}$, together with the vanishing
property in
the definition of $\tau_{\sC}$ and $\tau_{\sD}$, yields injectivity of the maps (1) and (2). So it suffices to show that the image of $f$
in $\pi_0\sE(i_{\sD}\pi_0R_{\sD}X, i_{\sD}\tau_{\le 0}L_{\sD} X)$ is mapped to zero via the horizontal map. By definition of $f$ the
image lifts via (3) to a map $\tilde{f}= i_{\sD}\tau_{\le 0}(\gamma)$.
The long exact sequence associated with the exact triangle
$$i_{\sD}\tau_{\ge 1}R_{\sD}X \to i_{\sD}R_{\sD}X \to i_{\sD}\tau_{\le 0}R_{\sD}X$$
together with the vanishing property in the definition of $\tau_{\sC}$ and $\tau_{\sD}$ implies that the maps (4) and (5) are isomorphisms.
So it suffices to show that the image of $\tilde{f}$ via (4) is mapped to zero via the horizontal map.
The image of $\tilde{f}$ is the composite $i_{\sD}R_{\sD}X \to i_{\sD}\tau_{\le 0} R_{\sD} X \to i_{\sD} \tau_{\le 0}L_{\sD} X$.
This factorizes as $i_{\sD}R_{\sD}X \to i_{\sD}L_{\sD}X \to i_{\sD} \tau_{\le 0}L_{\sD} X$.
Therefore, the result follows from the fact that the composite of the two maps in an exact triangle is null-homotopic.

\bibliographystyle{abbrv}

{\small
\bibliography{references}
}

\end{document}